\def\bs{}
\newcommand{\U}{\mathcal U}
\newcommand{\p}{\varphi}
\newcommand{\la}{\lambda}
\renewcommand{\a}{\alpha}
\renewcommand{\b}{\beta}
\newcommand{\D}{\nabla}
\def\d{\partial}
\def\e{\varepsilon}
\newcommand{\RR}{\mathcal{R}}
\renewcommand{\t}{\widetilde}
\newcommand{\R}{\mathbb{R}}
\def\HH{\mathcal{H}}
\def\AA{\mathcal{A}}
\def\DD{\mathcal{D}}
\newcommand\calY{\mathcal{Y}}
\newcommand{\Y}{\mathcal Y}
\newcommand\wtto{\xrightharpoonup{2}}
\newcommand\hx{x}
\renewcommand\boldsymbol{}
\newcommand\ya{y}
\newcommand\ua{u}
\newcommand\wa{w}
\newcommand\na{n}
\renewcommand{\t}{\widetilde}
\renewcommand{\o}{\tilde}
\renewcommand{\d}{\partial}
\DeclareMathOperator{\Defy}{Def_{\mathcal Y}}
\DeclareMathOperator{\Hessy}{Hess_{\mathcal Y}}
\newcommand\N{\mathbb{N}}
\newcommand\Z{\mathbb{Z}}
\renewcommand\AA{{\mathbf S}}
\newcommand\zetaa{{\boldsymbol \zeta}}
\renewcommand\boldsymbol{}
\renewcommand{\t}{\widetilde}
\renewcommand{\o}{\tilde}
\newcommand{\SO}[1]{\operatorname{SO}(#1)}
\newcommand\sym{\operatorname{sym}}
\newcommand\esssup{\mathop{\operatorname{ess\,sup}}}
\newcommand\ud{\ d}
\newcommand\dist{\operatorname{dist}}
\newcommand\iso{\textrm{iso}}
\DeclareMathOperator{\osc}{osc}
\DeclareMathOperator{\Tr}{Tr}
\def\D{\nabla}
\def\a{\alpha}
\def\b{\beta}
\def\XXint#1#2#3{{\setbox0=\hbox{$#1{#2#3}{\int}$}
     \vcenter{\hbox{$#2#3$}}\kern-.5\wd0}}
\newcounter{bei}
\renewcommand{\S}{\mathbb{S}}
\renewcommand{\o}{\overline}
\newcommand{\zwo}[2]{\begin{pmatrix} {#1}\\{#2} \end{pmatrix}}
\newtheorem{theorem}{Theorem}[section]
\newtheorem{lemma}[theorem]{Lemma}
\newtheorem{proposition}[theorem]{Proposition}
\DeclareMathOperator{\vol}{vol}
\DeclareMathOperator{\graph}{graph}
\begin{document}
\title{Regularity of intrinsically convex $W^{2,2}$ surfaces
and a derivation of a homogenized bending theory 
of convex shells}

\author{
Peter Hornung\footnote{Fachbereich Mathematik, TU Dresden,
01062 Dresden (Germany)
}
\
and
Igor Vel\v ci\'c\footnote{University of Zagreb, Faculty of Electrical Engineering and Computing,
Unska 3, Zagreb (Croatia)
}
}

\date{}

\maketitle
\begin{abstract}
We prove interior regularity for $W^{2,2}$ isometric immersions 
of surfaces endowed with a smooth Riemannian
metric of positive Gauss curvature.
\\
We then derive the $\Gamma$-limit of three dimensional nonlinear
shells with inhomogeneous energy density, in the bending energy regime.
This derivation is incomplete in that it requires an additional technical hypothesis.
\end{abstract}
\vspace{10pt}

\noindent {\bf Keywords:}
isometric immersions, positive Gauss curvature, regularity,
elasticity, dimension reduction, homogenization, shell theory,
two-scale convergence, Gamma convergence.

\section{Introduction}

For $C^2$ isometric immersions $u$ of a two-dimensional Riemannian manifold with positive Gauss
curvature into $\R^3$, there is a link between the regularity of the metric and the regularity
of $u$; in particular, if the metric is smooth then so is $u$.
Without a priori assumptions on the regularity of $u$ this link is broken.
\\
In the present paper, we show that square integrability of the second fundamental form of $u$ 
is sufficient for the link to persist. In particular, if the metric is smooth,
then $u$ is smooth in the interior, provided that initially it belongs to the Sobolev space $W^{2,2}$.
\\
Our regularity results for metrics with positive Gauss curvature rely upon earlier work by {\v{S}}ver{\'a}k
on the Monge-Amp\`ere equation. Due to the low regularity, 
the passage from the scalar problem to the vectorial problem addressed here is not trivial.
\\
Relaxing $C^2$ regularity to regularity on the Sobolev scale is important for variational
problems: the $W^{2,2}$ isometric immersions studied here arise naturally in thin film elasticity.
In the present paper, we use this regularity result to derive homogenized 
bending models for convex shells from three dimensional nonlinear elasticity. 
\\

Regarding shells theories in elasticity, we refer to \cite{Ciarletshell00}
for an overview of the derivation via formal asymptotic expansions.
In the case of linearly elastic shells, these models can also be justified
rigorously.
\\
More recently, nonlinear models for
rods, curved rods, plates and shells have been derived rigorously
by means of $\Gamma$-convergence,
starting from three dimensional nonlinear
elasticity. The first results in that direction can be found in
\cite{AcBuPe,LDRa95, LDRa96}.
The nonlinear bending theory for plates
was derived in \cite{FJM-02}, and the corresponding theory
for shells in \cite{FJMMshells03}.
\\
In the second part of this article we derive a homogenized nonlinear bending
theory of shells, by simultaneous homogenization and dimension reduction.
This generalizes the results from \cite{FJMMshells03}.
Our starting point is the energy functional of three dimensional nonlinear elasticity:
We consider a reference configuration which is a
shell $S^h \subset \R^3$ of thickness $h > 0$ around an embedded
surface $S\subset\R^3$. The elastic energy stored in the deformed configuration
determined by a deformation $u\in W^{1,2}(S^h, \R^3)$
is given by
\begin{equation}\label{intro:1}
\frac{1}{h^2\,|S^h|}\int_{S^h}W_\e(x,\nabla \bs u(x))\,dx.
\end{equation}
The function $W_\e$ is a stored energy function that oscillates periodically
in $x$, with some period $\e\ll 1$. We are interested in the
effective behaviour of the functionals \eqref{intro:1}
when both the thickness $h$ and the period $\e$ are small:
we consider the asymptotic behaviour of \eqref{intro:1}
when $h$ and $\e$ tend to zero simultaneously.
\\
Such a combination of dimension reduction and homogenization
was studied, e.g., in \cite{Braides-Fonseca-Francfort-00}.
More recently, homogenized nonlinear plate theories in the von K\'arman energy regime and in the bending
regime were studied in \cite{NeuVel-12} and in \cite{Horneuvel12,Vel13}.
In these cases one does not obtain
an infinite-cell homogenization formula as in the membrane case studied in
\cite{Braides-Fonseca-Francfort-00}. This is because
for small strains the energy is essentially convex, so one can use two-scale convergence techniques.
\\

The derivation of a homogenized theory of shells in the von K\'arm\'an energy regime
was carried out in \cite{Hornungvel12}. Different models
were obtained in the regime $h \ll\e$.
For generic shells, the models for the situations $\e^2 \lesssim h\ll \e$ 
have been derived.
For convex shells, the whole regime $h \ll \e$ is now understood.
\\
The geometric framework developed in \cite{Hornungvel12} will be used in the present paper as well.
Here we are interested in the analogous theory for the bending energy regime. We
restrict ourselves to convex shells. Our main
result in this direction is Theorem \ref{tm:glavni}. 
The derivation of the lower bound is quite natural. However,
as usual, we can prove sharpness of the lower bound only
for regular limiting deformations.
We are not able to close this regularity gap. However, our regularity result
Theorem \ref{thm1} allows us to narrow the gap: using it, we can construct the
required recovery sequence starting from a limiting deformation which is not
in $W^{3, \infty}$, but merely in $W^{2, \infty}$. In addition, 
Theorem \ref{thm1} confirms
the intuition that all finite energy deformations of a convex shell preserve
convexity.

\section{Regularity of intrinsically convex $W^{2,2}$ surfaces}

The purpose of this chapter is to prove the following result:

\begin{theorem}\label{thm1}
	Let $U\subset\R^2$ be open and 
	let $g\in C^{\infty}( U, \R^{2\times 2}_{\sym})$ be a smooth Riemannian metric on $U$. 
	Assume that the Gauss curvature $K$ of $g$ is everywhere positive 
	and let $u : U\to\R^3$ belong to the space
	$$
	W^{2,2}_g(U) = 
	\left\{
	u\in W^{2,2}(U, \R^3) : (\D u)^T(\D u) = g \mbox{ almost everywhere on }U
	\right\}.
	$$
	Then $u\in C^{\infty}(U)$.
\end{theorem}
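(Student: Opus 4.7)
The plan is to reduce the vectorial problem to a scalar Monge--Amp\`ere equation, invoke \v{S}ver\'ak's regularity theorem for its $H^2$ solutions, and then bootstrap back to smoothness of $u$. Fix $x_0 \in U$ and work near it. The isometry forces $|\nabla u|^2 = \mathrm{tr}\, g \in L^\infty$ and $|\d_1 u \wedge \d_2 u|^2 = \det g$ is bounded away from $0$, so the unit normal $n = (\d_1 u \wedge \d_2 u)/\sqrt{\det g}$ lies in $H^1 \cap L^\infty$. After a smooth change of coordinates in $U$ achieving $g(x_0) = I$ and a rotation of $\R^3$, my goal is to represent $u$ on a neighborhood $V$ of $x_0$ as a graph
\begin{equation*}
u(x) = \bigl(f(x),\, \phi(f(x))\bigr), \qquad f := (u_1, u_2),\ \ \phi : f(V) \to \R,
\end{equation*}
with $f$ a bi-Lipschitz homeomorphism and $\phi \in H^2(f(V))$.

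Establishing this graph representation is the principal obstacle, since $H^2$ regularity in two dimensions does not render $\nabla u$ or $n$ continuous, and hence the inverse function theorem for $f$ is not directly available. My strategy is to choose $x_0$ as a Lebesgue point of $n$ and rotate $\R^3$ so that $n(x_0) = e_3$. The scalar function $w := u_3 \in H^2$ satisfies $\nabla w(x_0) = 0$, and the Gauss decomposition $\d_i \d_j u = \Gamma_{ij}^k \d_k u + \II_{ij} n$ of a second derivative (where $\Gamma$ are the Christoffel symbols of $g$) yields pointwise a.e.
\begin{equation*}
\det \bigl(D^2 w - \Gamma^k_{\cdot\cdot}\, \d_k w\bigr) = n_3^2\, K \det g > 0,
\end{equation*}
a Monge--Amp\`ere-type equation for $w$ with strictly positive right-hand side. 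Combined with $L^2$-smallness of $n - e_3$ on small balls (from $n \in H^1$ and the Lebesgue point choice), \v{S}ver\'ak's theorem on $H^2$ solutions to the Monge--Amp\`ere equation forces convexity (with the chosen normal orientation) and $C^{1,\alpha}$ regularity of $w$ near $x_0$, and in particular continuity of $n_3 = \det \nabla f / \sqrt{\det g}$; the standard theory of Sobolev maps with a.e.\ positive Jacobian then shows that $f|_V$ is a homeomorphism onto its image, and $\phi := u_3 \circ f^{-1}$ is a well-defined element of $H^2(f(V))$.

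With the graph representation in hand, the classical identity $\det D^2 \phi = K (1 + |\nabla \phi|^2)^2$ for the Gauss curvature of a graph presents $\phi$ as an $H^2$ solution of a Monge--Amp\`ere equation with continuous, strictly positive right-hand side, so \v{S}ver\'ak's theorem and then Caffarelli's interior regularity theory for the Monge--Amp\`ere equation bootstrap $\phi$ to $C^\infty(f(V))$. To finish, the isometry constraint in graph form
\begin{equation*}
(\nabla f)^T \bigl(I + (\nabla\phi \circ f)(\nabla\phi \circ f)^T\bigr) \nabla f = g,
\end{equation*}
together with $\det \nabla f > 0$ and the now-established smoothness of $\phi$ and $g$, exhibits $f$ as an $H^2$ isometry between two smooth 2D Riemannian manifolds; a routine elliptic bootstrap (or the classical regularity of isometries between smooth Riemannian surfaces) then yields $f \in C^\infty(V)$ and hence $u \in C^\infty(V)$. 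Since $x_0 \in U$ was arbitrary, $u \in C^\infty(U)$. The difficulty truly lies in the middle paragraph: extracting a graph structure (and a bona fide scalar Monge--Amp\`ere equation with positive right-hand side) from a merely $H^2$ vectorial isometric immersion.
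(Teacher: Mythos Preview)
Your overall architecture---graph representation, scalar Monge--Amp\`ere, bootstrap---is the same as the paper's. The final bootstrap is essentially right (the paper closes via the Darboux equation for each scalar component $e\cdot u$, but your isometry-between-smooth-surfaces argument would also work). The genuine gap is exactly where you flag it: extracting the graph structure.

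Your plan is to apply \v{S}ver\'ak's theorem to $w=u_3$ via the Darboux equation
\[
\det\bigl(D^2 w + \Gamma^k_{\cdot\cdot}\,\partial_k w\bigr)=n_3^2\,K\,|g|.
\]
This is circular. \v{S}ver\'ak's lemma (as stated and used in the paper) requires a pointwise lower bound $\det D^2 f\ge c>0$. Here the right-hand side involves $n_3^2$, and you have no a priori pointwise lower bound on $n_3$ near $x_0$: that is precisely the continuity of $n$ you are trying to prove. Choosing $x_0$ to be a Lebesgue point of $n$ gives only integral smallness of $n-e_3$ (and of $\nabla w$) on small balls, not the uniform positivity you need. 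For the same reason you cannot absorb the first-order perturbation $\Gamma^k\partial_k w$ into $D^2w$: on small balls $\nabla w$ is small in $L^2$ but not in $L^\infty$, so $\det D^2w$ need not inherit a positive lower bound. Finally, even if the argument went through at Lebesgue points, you would only obtain smoothness on an open dense subset of $U$, not on all of $U$, without a further argument controlling how the neighborhoods degenerate.

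The paper breaks this circularity by proving continuity of $n_u$ \emph{before} any graph representation, via an independent degree-theoretic argument (Proposition~\ref{pro1}): one shows that the Gauss map $n_u$ has nonnegative degree on every ball (since $K>0$), that the degree must vanish somewhere on $\S^2$ (by an area-counting argument), hence $n_u$ essentially omits an open subset of $\S^2$; composing with stereographic projection from a point in that set yields a planar map in $H^1\cap L^\infty$ with Jacobian bounded away from zero, and Reshetnyak's theory then gives continuity. With $n_u$ continuous, $\det\nabla(u_1,u_2)\ge c>0$ on a genuine neighborhood, Clarke's Lipschitz inverse function theorem produces the bi-Lipschitz graph chart, and only then does \v{S}ver\'ak's lemma enter, applied to the graph function $f$ where the right-hand side $K(\Phi)(1+|\nabla f|^2)^2$ is manifestly bounded below.
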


In the statement of this theorem and elsewhere, we always refer to the precise
representative of the Sobolev functions in question.
\\
To prove Theorem \ref{thm1} we use ideas and a key result from the unpublished (but widely circulated) manuscript \cite{sverak}. 
For our purposes, its main result is to deduce convexity of $W^{2,2}$
solutions $f$ of the Monge-Amp\`ere inequality $\det \D^2 f \geq c > 0$,
cf. Lemma \ref{le-sverak} below.
In \cite{sverak}, this result is combined with a local graphical representation 
to prove smoothness of $C^{1,1}$ isometric immersions of subdomains of the sphere, 
endowed with the standard metric.
\\
Our proof of Theorem \ref{thm1} also uses this idea of representing $u$ locally as a graph of a function $f$. However, a priori $u$ is not $C^1$.
Instead, we show that the 
normal $n_u$ to $u$ is continuous. It is defined by
$$
n_u = \frac{\d_1 u\times\d_2 u}{|\d_1 u\times\d_2 u|}.
$$
It turns out that continuity of the normal
is enough to ensure that $u$ be locally a $C^1$ graph.
\\
Finally, a bootstrap argument, using classical facts about Monge-Amp\`ere equations
on one hand and exploiting the link between $u$ and its graphical representation on the other hand, 
implies that $u$ is smooth.

\subsection{Continuity of the normal}

The purpose of this section is to provide a fairly self-contained proof of Proposition \ref{pro1} below.
In doing so, we combine ideas from \cite{BrezisNirenberg1, BrezisNirenberg2} 
and others, and we introduce a suitable notion of topological degree. For the reader's convenience,
we include proofs of its relevant properties.

In what follows, we use the notation $|g| = \det g$. The Christoffel symbols of $g$ are denoted by $\Gamma_{\a\b}^{\gamma}$. The Gauss curvature of the Riemannian metric
$g$ is denoted by $K$. By $B_R$ we denote the open ball of radius $R$ in $\R^2$
centered at the origin. And $U\subset\R^2$ is an open set unless specified
otherwise.

Define $h : U\to\R^{2\times 2}_{\sym}$ by $h = n_u\cdot\D^2 u$.
The Gauss equation is easily seen to remain true for $u\in W^{2,2}$. It reads:
\begin{equation}
\label{gk-5}
\d_{\a}\d_{\b} u = h_{\a\b} n_u + \Gamma_{\a\b}^{\gamma}\d_{\gamma} u.
\end{equation}
Since we are dealing with $W^{2,2}$ maps, we should verify the validity
of Gauss' Theorema Egregium.

\begin{lemma}\label{egregium}
If $u\in W^{2,2}_g(U)$ and $h = n_u\cdot\D^2 u$, then $\det h = K |g|$ almost everywhere on $U$.
\end{lemma}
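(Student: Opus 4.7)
The plan is to adapt the classical proof of Theorema Egregium to the $H^2$ setting by carefully tracking the regularity of each building block. Writing $v_\alpha = \d_\alpha u \in H^1(U,\R^3)$, the isometry condition $v_\alpha\cdot v_\beta = g_{\alpha\beta}$ combined with equality of mixed partials $\d_\alpha v_\beta = \d_\beta v_\alpha$ (valid in $L^2$ since $u\in H^2$) yields, by the standard cyclic manipulation, the identity
\[
v_\beta\cdot \d_\gamma v_\alpha = g_{\beta\delta}\Gamma^\delta_{\gamma\alpha}
\]
almost everywhere in $U$. The crucial observation is that this product, a priori only of class $L^q$ for some $q>1$, in fact coincides with a smooth function; this will be the linchpin making the subsequent distributional manipulations rigorous.

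Next I would compute $\d_\epsilon\d_\gamma v_\alpha\cdot v_\beta$, a priori only a distribution of order $-1$, and show that it is of class $L^1_{\loc}$. Since $|v_\beta|^2=g_{\beta\beta}$ is smooth and bounded, $v_\beta\in H^1\cap L^\infty$, so for every $\varphi\in C_c^\infty(U)$ the product $v_\beta\varphi$ is a legitimate test function for the $H^{-1}$ distribution $\d_\epsilon\d_\gamma v_\alpha$. Testing on both sides verifies the distributional Leibniz identity
\[
\d_\epsilon\d_\gamma v_\alpha\cdot v_\beta = \d_\epsilon(v_\beta\cdot\d_\gamma v_\alpha) - \d_\epsilon v_\beta\cdot\d_\gamma v_\alpha.
\]
The first summand on the right is the classical derivative of the smooth function $g_{\beta\delta}\Gamma^\delta_{\gamma\alpha}$; the Gauss equation \eqref{gk-5} together with the orthogonality $v_\delta\cdot n_u = 0$ expands the second summand into $h_{\epsilon\beta}h_{\gamma\alpha} + g_{\delta\rho}\Gamma^\delta_{\gamma\alpha}\Gamma^\rho_{\epsilon\beta}$, which is in $L^1_{\loc}$.

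The final step is to antisymmetrize the resulting formula in the pair $(\gamma,\epsilon)$ using $\d_\epsilon\d_\gamma v_\alpha = \d_\gamma\d_\epsilon v_\alpha$. Invoking metric compatibility $\d_\gamma g_{\beta\delta} = g_{\delta\rho}\Gamma^\rho_{\gamma\beta} + g_{\beta\rho}\Gamma^\rho_{\gamma\delta}$, the smooth terms on the right hand side assemble into the intrinsic Riemann tensor of $g$, yielding
\[
h_{\epsilon\alpha}h_{\gamma\beta} - h_{\gamma\alpha}h_{\epsilon\beta} = R_{\beta\alpha\gamma\epsilon} \quad\text{a.e.\ in } U.
\]
In two dimensions, $R_{\beta\alpha\gamma\epsilon} = K(g_{\beta\gamma}g_{\alpha\epsilon} - g_{\beta\epsilon}g_{\alpha\gamma})$; specializing to $(\alpha,\beta,\gamma,\epsilon)=(1,2,1,2)$ produces $\det h = K|g|$ a.e., as required. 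The main obstacle to overcome is conceptual: a priori the identity $\d_\epsilon\d_\gamma v_\alpha = \d_\gamma\d_\epsilon v_\alpha$ holds only in $H^{-1}$, and a naive expansion would force distributional derivatives of the $L^2$ matrix $h$ to appear. The preliminary smoothness of $v_\beta\cdot \d_\gamma v_\alpha$ sidesteps this issue by allowing the term $\d_\epsilon(v_\beta\cdot\d_\gamma v_\alpha)$ to be computed classically, so that $h$ enters only through undifferentiated $L^2$ occurrences via the Gauss equation.
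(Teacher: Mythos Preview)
Your argument is correct and complete; the distributional Leibniz step and the antisymmetrization are justified exactly as you describe, and the smoothness of $v_\beta\cdot\d_\gamma v_\alpha = g_{\beta\delta}\Gamma^\delta_{\gamma\alpha}$ is indeed the key that keeps third derivatives of $u$ (and hence derivatives of $h$) out of the final identity.

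The paper, however, takes a different and somewhat shorter route. Rather than commuting third partials, it invokes an approximation identity from \cite{FJM-06}: for $u\in H^2_g$ one has
\[
|\d_1\d_2 u|^2 - \d_1^2 u\cdot\d_2^2 u = \tfrac{1}{2}(\d_2^2 g_{11} + \d_1^2 g_{22} - 2\d_1\d_2 g_{12})
\]
almost everywhere, obtained by mollifying $u$ and passing to the limit. Each $\d_\alpha\d_\beta u$ is then split, via the Gauss equation \eqref{gk-5}, into its normal part $h_{\alpha\beta}n_u$ and its tangential part $\Gamma_{\alpha\beta}^\gamma\d_\gamma u$; the normal contributions on the left assemble into $-\det h$, while the tangential contributions together with the right-hand side form a purely intrinsic expression in $g$, which for a smooth metric is the classical $-K|g|$. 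What the paper's approach buys is brevity, at the cost of importing an external lemma. What your approach buys is self-containment and a bit more: you actually recover the full Gauss equation $h_{\epsilon\alpha}h_{\gamma\beta}-h_{\gamma\alpha}h_{\epsilon\beta}=R_{\beta\alpha\gamma\epsilon}$ before specializing, and your argument makes no use of approximation by smooth maps.
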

\begin{proof}
As shown in \cite[Proof of Proposition 6]{FJM-06}, by approximation it is easy to see
that the map $u$ satisfies
\begin{equation}
\label{egr-1}
|\d_1\d_2 u|^2 - \d_1\d_1 u\cdot \d_2\d_2 u = 
\frac{1}{2}\left( \d_2\d_2 g_{11} + \d_1\d_1 g_{22} - 2 \d_1\d_2 g_{12} \right)
\end{equation}
almost everywhere on $U$. Denote by $P(x)$ the orthogonal projection
from $\R^3$ onto the subspace spanned by $\d_1 u(x)$ and $\d_2 u(x)$.
Then we deduce from \eqref{egr-1} that
\begin{equation}\label{egr-2}
\det h = - |P(\d_{1}\d_2 u)|^2 + P(\d_1\d_1 u)\cdot P(\d_2\d_2 u)
+ \frac{1}{2}\left( \d_2\d_2 g_{11} + \d_1\d_1 g_{22} - 2 \d_1\d_2 g_{12} \right).
\end{equation}
But in view of \eqref{gk-5} we have
$$
P(\d_{\a}\d_{\b} u)\cdot P(\d_{\gamma}\d_{\delta} u) = \Gamma_{\a\b}^{\rho}\Gamma^{\sigma}_{\gamma\delta}
g_{\rho\sigma}.
$$
We conclude that the right-hand side of \eqref{egr-2} can be computed from $g$ and its derivatives.
Since $g$ is smooth, a classical computation therefore 
shows that the right-hand side of \eqref{egr-2} agrees with $K  |g| $. 
\end{proof}

\begin{proposition}\label{pro1}
Let $g\in C^{\infty}(U)$ be a smooth Riemannian metric on $U$. 
Assume that the Gauss curvature $K$ of $g$ is positive 
on $U$ and let $u\in W^{2,2}_g(U)$.
Then the normal $n_u$ to $u$ is continuous on $U$.
\end{proposition}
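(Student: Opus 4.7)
The plan is to show that $n_u$ lies in $H^1(U, \mathbb{S}^2)$ with a locally positive Jacobian, and then to upgrade this to pointwise continuity via a topological-degree argument in the spirit of Brezis--Nirenberg.

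First I would establish the Weingarten identity $\partial_\alpha n_u = -h_{\alpha\beta}\, g^{\beta\gamma}\, \partial_\gamma u$, obtained by differentiating the $H^1$ constraints $|n_u|^2 = 1$ and $n_u \cdot \partial_\beta u = 0$ and inserting \eqref{gk-5}. Since $|h_{\alpha\beta}| \leq |\D^2 u| \in L^2_{\loc}$ and $|\partial_\gamma u|^2 = g_{\gamma\gamma} \in L^\infty_{\loc}$, this gives $n_u \in H^1_{\loc}(U, \mathbb{S}^2)$. A short computation using $\partial_1 u \times \partial_2 u = \sqrt{|g|}\, n_u$ and Lemma \ref{egregium} then yields
$$
\partial_1 n_u \times \partial_2 n_u \;=\; \det(hg^{-1})\, \sqrt{|g|}\, n_u \;=\; K\sqrt{|g|}\, n_u \qquad \text{in } L^1_{\loc}(U, \R^3),
$$
so the Jacobian of $n_u$, viewed as a map between oriented $2$-manifolds, is locally bounded below by a positive constant.

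The main step is to convert this non-degeneracy into pointwise continuity. In dimension $2$, $H^1 \subset \mathrm{VMO}$, so $n_u$ is VMO into $\mathbb{S}^2$. Following \cite{BrezisNirenberg1, BrezisNirenberg2}, I would introduce a suitable local degree $\deg(n_u, B_r(x_0), p)$ defined for every $x_0 \in U$, almost every small $r > 0$, and every $p \in \mathbb{S}^2$ outside the image of the $H^{1/2}$-trace of $n_u$ on $\partial B_r(x_0)$, and derive its basic properties (integer-valuedness, homotopy invariance, additivity under disjoint decompositions, and a Jacobian-integral formula relating it to $\int_{B_r(x_0)} n_u^* \omega_{\mathbb{S}^2}$) in a self-contained way.

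I would conclude by contradiction. If $n_u$ failed to be continuous at some $x_0 \in U$, VMO regularity together with the assumed discontinuity would produce scales $r_k \downarrow 0$ on which $\deg(n_u, B_{r_k}(x_0), p_k) \neq 0$ for some $p_k \in \mathbb{S}^2$. By the Jacobian-integral formula, together with positivity of $K\sqrt{|g|}$, this would force $\int_{B_{r_k}(x_0)} K\sqrt{|g|}\, dx \geq c > 0$ uniformly in $k$, contradicting absolute continuity of the Lebesgue integral as $r_k \downarrow 0$. The main technical obstacle will be carrying out this degree argument at the low regularity $H^1$ (equivalently VMO), where $n_u$ need not be continuous a priori and all geometric intuitions must be rephrased via traces; the positive lower bound on the Jacobian $K\sqrt{|g|}$ provided by Lemma \ref{egregium} is the crucial ingredient that makes the argument work.
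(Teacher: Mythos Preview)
Your setup is essentially the paper's: you correctly establish $n_u\in H^1_{\loc}(U,\mathbb S^2)$ with Jacobian $n_u^*\omega_{\mathbb S^2}=K\sqrt{|g|}\,dx>0$, and you correctly invoke a Brezis--Nirenberg type degree for $H^1$ (VMO) maps into $\mathbb S^2$. Up to this point your proposal and the paper's proof coincide.

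The gap is in your final paragraph. Two steps are unjustified. First, you assert that a discontinuity of $n_u$ at $x_0$ ``would produce scales $r_k\downarrow 0$ on which $\deg(n_u,B_{r_k}(x_0),p_k)\neq 0$.'' This implication is not obvious: a VMO map can perfectly well be discontinuous at a point while having degree zero everywhere on all small balls (there is no a priori reason the essential image should wrap around $\mathbb S^2$). Second, even granting $\deg\ge 1$ at some $p_k$, the degree formula only gives $\int_{B_{r_k}}(\rho\circ n_u)\,K\sqrt{|g|}=\int_{\mathbb S^2}\rho$ for test functions $\rho$ supported in the connected component $\Lambda_{p_k}$; passing to $\int_{B_{r_k}}K\sqrt{|g|}\ge c$ requires $\|\rho\|_\infty\le 1/c$, i.e.\ a uniform lower bound on the area of $\Lambda_{p_k}$, which you do not have.

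The paper uses the degree theory for a different purpose. It does not argue by contradiction on continuity; instead it shows (Lemma~\ref{le4}) that for $r$ small, since $\int_{B_r}K\sqrt{|g|}<\tfrac14\mathcal H^2(\mathbb S^2)$, the degree must vanish on some component $\Lambda\subset\mathbb S^2\setminus n_u(\partial B_r)$. By Lemma~\ref{le3} this component is disjoint from the essential range of $n_u|_{B_r}$, so $n_u$ \emph{misses an open set} in $\mathbb S^2$. One then composes with a stereographic projection $\Psi$ from a point of $\Lambda$ to obtain a planar map $\Psi\circ n_u\in H^1\cap L^\infty(B_r,\R^2)$ whose Jacobian is bounded away from zero; continuity then follows from Reshetnyak's theory of mappings with bounded distortion (cf.\ \cite{Reshetnyak} or \cite{IwaniecSverak}). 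This reduction to the planar case via the ``missed point'' is the idea that your argument is lacking.
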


In order to prove Proposition \ref{pro1}, we will introduce and prove some results
about the topological degree of $\S^2$-valued maps.
\\
So let $\p\in W^{1,2}(B_{R}, \S^2)$.
Then, for almost every $r\in (0, R)$ we have
$
\p|_{\d B_r}\in W^{1,2}(\d B_r)$, hence by Sobolev embedding
\begin{equation}
\label{p1-1}
\p|_{\d B_r}\in C^0(\d B_r).
\end{equation}
By a classical result of Schoen and Uhlenbeck,
there exist $\p_k\in C^{\infty}(B_R, \S^2)$ 
converging strongly in $W^{1,2}$ to $\p$ as $k\to\infty$.
After possibly passing to a subsequence, we may assume that
$\p_k\to\p$ in $W^{1,2}(\d B_r)$ for almost every $r\in (0, R)$. Hence
for such $r$
\begin{equation}
\label{p1-2}
\p_k\to \p \mbox{ uniformly on }\d B_r.
\end{equation}
In fact, setting $f_k(r) = \int_{\d B_r}|\D\p_k - \D\p|^2$, by the coarea formula we have
$$
\|f_k\|_{L^1(B_R)} = \int_0^R\ dr\ \int_{\d B_r}|\D\p_k - \D\p|^2 = \int_{B_R} |\D\p_k - \D\p|^2\to 0.
$$
Hence there is a subsequence such that $f_{k_j}(r)\to 0$ for almost every
$r\in (0, R)$.
\\
For $\p\in W^{1,2}(B_R, \S^2)$ we denote by $\RR_{\p}$ the set of
those $r\in (0, R)$ such that \eqref{p1-1} is satisfied
and such that, in addition, there
exist $\p_k\in C^{\infty}(B_R, \S^2)$ converging strongly in $W^{1,2}$
to $\p$ and satisfying \eqref{p1-2}.
Note that $\p(\d B_r)$ is compact for such $r$, due to \eqref{p1-1}.
\\
For $r\in\RR_{\p}$ define the degree
$Q : \S^2\to\R$ of $\p$ with respect to $B_r$ by setting
\begin{equation}
\label{p1-3}
Q(y) = \int_{B_r}\p^*\eta,
\end{equation}
where $\eta$ is any smooth $2$-form on $\S^2$ with $\int_{\S^2}\eta = 1$
which is supported in the connected
component $\Lambda_y$ of $\S^2\setminus \p(\d B_r)$ that contains $y$.
\\
We claim that $Q$ is well-defined, i.e., that it is independent of the choice of $\eta$.
We use the following well-known fact.
\begin{lemma}
Let $\Lambda\subset\S^2$ be connected and let 
$\t\eta$ be a smooth $2$-form on $\S^2$ whose support is contained in $\Lambda$
and which is such that $\int_{\S^2}\t\eta = 0$. Then there exists 
a smooth $1$-form $w$ on $\S^2$ with support in $\Lambda$ and such that 
$\t \eta = dw$.
\end{lemma}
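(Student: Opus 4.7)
The plan is to reduce the claim to a local compactly supported Poincar\'e lemma on geodesic discs, and then transport ``mass'' from one disc to another along a finite chain that exploits the path-connectedness of $\Lambda$. Since $\Lambda$ is an open connected subset of the $2$-manifold $\S^2$, it is path-connected. The compact set $K := \supp \t\eta$ lies in $\Lambda$, so I can cover $K$ by finitely many open geodesic balls $B_1,\dots,B_N$ with $B_j \Subset \Lambda$ and take a subordinate smooth partition of unity $\{\chi_j\}$. Writing $\omega_j := \chi_j \t\eta$ gives $\t\eta = \sum_j \omega_j$ with each $\omega_j$ smooth and compactly supported in $B_j$. Fix once and for all an auxiliary ball $B_0 \Subset \Lambda$ and a smooth $2$-form $\eta_0$ with $\supp \eta_0 \Subset B_0$ and $\int_{\S^2}\eta_0 = 1$. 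Setting $c_j := \int_{\S^2}\omega_j$, the hypothesis $\int \t\eta = 0$ gives $\sum_j c_j = 0$, hence
\[
\t\eta = \sum_j\bigl(\omega_j - c_j\eta_0\bigr),
\]
and it is enough to show that each $\omega_j - c_j\eta_0$ is the exterior derivative of a smooth $1$-form compactly supported in $\Lambda$.

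The local building block is the following compactly supported Poincar\'e lemma: if $B\subset\S^2$ is an open geodesic ball and $\omega$ is a smooth $2$-form with $\supp\omega \Subset B$ and $\int_B\omega = 0$, then $\omega = d\alpha$ for some smooth $1$-form $\alpha$ with $\supp \alpha \Subset B$. In local coordinates this reduces to writing $f\in C_c^\infty$ with $\int f = 0$ and support in a disc as $f = \partial_1 h_1 + \partial_2 h_2$ with $h_i \in C_c^\infty$ supported in the same disc. The standard route is: subtract from $f$ the product of its $x_1$-average by a fixed bump so that the remaining $x_1$-integral vanishes for every $x_2$, then antidifferentiate in the two variables separately, choosing the auxiliary bump carefully so that the primitives stay supported inside the disc. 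The main subtle point of the proof lies here, in ensuring disc support rather than merely rectangular support.

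Granting this local tool, I would handle each difference $\omega_j - c_j\eta_0$ by a mass transport along a chain of balls. Using path-connectedness of $\Lambda$, choose a finite sequence of open geodesic balls $D_0 = B_0, D_1, \dots, D_m = B_j$, each with $D_i \Subset \Lambda$, and $D_{i-1}\cap D_i \neq \emptyset$. On each nonempty intersection pick a smooth $2$-form $\mu_i$ with $\supp \mu_i \Subset D_{i-1}\cap D_i$ and $\int\mu_i = c_j$, and set $\mu_0 := c_j\eta_0$. Then
\[
\omega_j - c_j\eta_0 \;=\; (\omega_j - \mu_m) + \sum_{i=1}^m (\mu_i - \mu_{i-1}),
\]
where each consecutive difference is compactly supported in a single $D_i$ and has vanishing total integral. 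By the local lemma each such difference equals $d\alpha_i$ with $\alpha_i$ smooth and compactly supported in $D_i\subset\Lambda$. Summing the $\alpha_i$ over $i$, then over $j$, produces the desired $w$ with $\supp w$ compact in $\Lambda$ and $dw = \t\eta$. The whole argument is classical partition-of-unity plus telescoping bookkeeping once the disc-supported Poincar\'e lemma is granted; that lemma is therefore the main obstacle.
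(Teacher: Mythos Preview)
Your argument is correct and is in fact the standard proof that integration induces an isomorphism $H^2_c(\Lambda)\cong\R$ for a connected open surface $\Lambda$; see e.g.\ Bott--Tu or Madsen--Tornehave. The paper itself does not prove this lemma at all: its entire proof is a citation to Schwartz's \emph{Nonlinear Functional Analysis}. So there is no approach to compare against---you have supplied what the authors outsourced.

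Two minor remarks. First, your concern about ``disc support rather than merely rectangular support'' in the local step is overblown: a geodesic ball $B$ is diffeomorphic to $\R^2$, and in $\R^2$ any compact set is contained in some cube, so the rectangular construction you outline already yields a primitive with compact support in $\R^2$, hence (pulling back) with compact support in $B$. There is no subtlety here. Second, you are tacitly assuming $\Lambda$ is open, which the lemma as stated does not say; this is harmless because in the paper $\Lambda$ is always a connected component of the open set $\S^2\setminus\varphi(\partial B_r)$, but it is worth making explicit.
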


In view of the lemma it remains to show that if $w$ is a smooth $1$-form 
supported in $\Lambda_y$ then $\int_{B_r}\p^*(dw) = 0$.
\\
Since $dw$ is a $2$-form and since $\p_k\to\p$ strongly in $W^{1,2}(B_r)$, we see that
$$
\p_k^*(dw)\to \p^*(dw) \mbox{ strongly in }L^1(B_r).
$$
Hence 
\begin{equation}
\label{p1-4}
\int_{B_r} d(\p_k^*w) = \int_{B_r} \p_k^*(dw) \to \int_{B_r} \p^*(dw).
\end{equation}
Due to \eqref{p1-2}, the compact set $\p_k(\d B_r)$ does not intersect the support of
$w$ for $k$ large enough, because the latter has positive distance from the compact
set $\p(\d B_r)$. Therefore, $\p_k^* w$ has compact support in $B_r$. Hence,
by Stokes' theorem, the left-hand side of \eqref{p1-4} is zero. This concludes the proof
showing that $Q$ is well-defined by \eqref{p1-3}.
\\
Recall that the essential range of $\p|_{B_r}$ is the smallest closed set
$F$ such that $\p(x)\in F$ for almost every $x\in B_r$; as shown in \cite{BrezisNirenberg1}
it is well-defined.
More or less directly from the definition of $Q$, we see the following:

\begin{lemma}\label{le2}
Let $\p\in W^{1,2}(B_R, \S^2)$, let $r\in\RR_{\p}$
and define $Q$ as in \eqref{p1-3}.
Then the following are true:
\begin{enumerate}[(i)]
\item \label{p1-6}
$Q$ is constant on every connected component of $\S^2\setminus\p(\d B_r)$;
\item \label{p1-7}
If $Q(y)\neq 0$ then $\Lambda_y$ is contained in the essential range $F$ of
$\p|_{B_r}$.
\item \label{p1-7b}
$Q$ takes integer values.
\end{enumerate}
\end{lemma}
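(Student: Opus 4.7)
The plan is to handle the three parts in turn. Part (\ref{p1-6}) is immediate from the well-definedness of $Q$ established just above: if $y$ and $y'$ lie in the same component of $\S^2\setminus\p(\d B_r)$, then $\Lambda_y=\Lambda_{y'}$, so a single form $\eta$ supported in this common component computes both $Q(y)$ and $Q(y')$. For (\ref{p1-7}) and (\ref{p1-7b}) I would additionally invoke the smooth approximants $\p_k\to\p$ together with the uniform convergence on $\d B_r$ afforded by \eqref{p1-2}.

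For (\ref{p1-7}) I would argue by contraposition. Assuming $\Lambda_y\not\subset F$, pick $y_0\in\Lambda_y\setminus F$; since $F$ is closed and $\Lambda_y$ is open, one finds an open neighborhood $V$ of $y_0$ with $V\subset\Lambda_y\setminus F$. Choose $\eta$ smooth, supported in $V$, with $\int_{\S^2}\eta=1$; being supported in $\Lambda_y$, this $\eta$ is admissible in the definition of $Q(y)$. Since $\p(x)\in F$ almost everywhere while $V\cap F=\emptyset$, we have $\p(x)\notin V$ almost everywhere, so $\p^*\eta=0$ a.e.\ and $Q(y)=0$.

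Part (\ref{p1-7b}) is the crux. Given $y\in\S^2\setminus\p(\d B_r)$, compactness of $\p(\d B_r)$ from \eqref{p1-1} allows one to fix an open neighborhood $V$ of $y$ with $\bar V\cap\p(\d B_r)=\emptyset$, and \eqref{p1-2} then gives $\bar V\cap\p_k(\d B_r)=\emptyset$ for all $k$ sufficiently large. Fix a smooth $2$-form $\eta$ supported in $V$ with $\int_{\S^2}\eta=1$ and set $Q_k(y)=\int_{B_r}\p_k^*\eta$. The strong $H^1$ convergence $\p_k\to\p$ yields $\p_k^*\eta\to\p^*\eta$ in $L^1(B_r)$, hence $Q_k(y)\to Q(y)$. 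Because $\p_k$ is smooth, Sard's theorem provides a regular value $y'\in V$ of $\p_k|_{B_r}$; at such a regular value the classical formula from smooth degree theory identifies $\int_{B_r}\p_k^*\eta$ with the signed count of points in $\p_k^{-1}(y')$, which is an integer. Running the same argument as in (\ref{p1-6}) for $\p_k$ on the connected component of $\S^2\setminus\p_k(\d B_r)$ containing $V$ then gives $Q_k(y)=Q_k(y')\in\Z$, and a limit of integers is an integer.

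The main obstacle is the clean invocation of classical smooth degree theory for the smooth approximants, identifying $\int_{B_r}\p_k^*\eta$ with a signed preimage count at a regular value; this is the only genuinely nontrivial ingredient from smooth manifold theory, and I would quote it rather than reprove it. The remaining ingredients (Sobolev approximation of $\p$, uniform control on $\d B_r$, strong $H^1$ stability of the pullback) have already been deployed in the well-definedness argument for $Q$ given just before the lemma, so reusing them is straightforward.
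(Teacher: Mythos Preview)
Your proof is correct and follows essentially the same approach as the paper's: part (i) is immediate from the definition, part (ii) is the same contraposition argument, and part (iii) approximates by smooth $\p_k$, passes to the limit in $\int_{B_r}\p_k^*\eta$ via $L^1$ convergence of the pullbacks, and invokes that the smooth degree is integer-valued. Your treatment of (iii) is in fact slightly more careful than the paper's terse one-line remark, since you explicitly verify that $\p_k(\d B_r)$ stays away from the support of $\eta$ for large $k$, which is what makes the classical smooth degree interpretation of $\int_{B_r}\p_k^*\eta$ legitimate.
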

\begin{proof}
To prove \eqref{p1-7}, assume that $\Lambda_y$ is not contained in $F$. Then
there exists an open set $\Lambda\subset\Lambda_y\setminus F$
and a normalized smooth $2$-form $\eta$ supported on $\Lambda$.
So $\p^*\eta = 0$ almost everywhere on $B_r$. Hence we would have $Q(y) = 0$.
\\
To prove \eqref{p1-7b} just note that the last convergence in \eqref{p1-4}
is also true for any other $2$-form; in particular for the form $\eta$ in \eqref{p1-3}.
But for smooth $\p$, the right-hand side of \eqref{p1-3}
is known to attain only integer values.
\end{proof}

\begin{lemma}\label{le3}
Let $u\in W^{2,2}_g(B_R)$, let $r\in\RR_{n_u}$
and define $Q$ as in \eqref{p1-3} with $\p = n_u$.
Then $Q \geq 0$ on $\S^2\setminus n_u(\d B_r)$.
Moreover, if $y\in \S^2\setminus n_u(\d B_r)$ is such that
$Q(y) = 0$, then $\Lambda_y$ does not intersect the essential
range of $n_u|_{B_r}$.
\end{lemma}
\begin{proof}
Denote by $\eta_{\S^2}$ the standard area form on $\S^2$. 
Then $n_u^*\eta_{\S^2} = K d\vol_g$, due to Lemma \ref{egregium}.
Applying \eqref{p1-3} with $\eta = \rho\eta_{\S^2}$, 
we see that
$$
\left(\int_{B_r} \rho\eta_{\S^2}\right)\ Q(y) = \int_{B_r} (\rho\circ n_u) K\ d\vol_g. 
$$
for every $\rho\in C^{\infty}_0(\Lambda_y)$.
Since $K > 0$ on $B_r$, we conclude that if $Q(y) = 0$ then
$\rho\circ n_u = 0$ almost everywhere on $B_r$. Since $\rho$ was arbitrary,
this implies that $n_u (x)\in \S^2\setminus \Lambda_y$ for almost every $x\in B_r$.
Since $\S^2\setminus\Lambda_y$ is closed, by minimality of the essential range
we conclude that it must be contained in $\S^2\setminus\Lambda_y$.
\end{proof}

\begin{lemma}\label{le4}
Let $u$ and $Q$ be as in the hypotheses of Lemma \ref{le3}.
If $r\in\RR_{n_u}$ is small enough, then $Q$ is zero at some point in 
$\S^2\setminus n_u(\d B_r)$.
\end{lemma}
\begin{proof}
There exists a constant $C$ depending only on $g$ such that
$$
\int_{B_r} K\ d\vol_g \leq Cr^2.
$$
We choose $r > 0$
so small that the right-hand side is bounded by $1/4$ times the area of $\S^2$.
\\
Assume for contradiction that $Q\neq 0$ 
everywhere on $\S^2\setminus n_u(\d B_r)$. Then by Lemma \ref{le3}
we know that $Q$ is positive and so by Lemma \ref{le2} we have $Q\geq 1$ on $\S^2\setminus n_u(\d B_r)$.
\\
Since $n_u\in W^{1,2}(\d B_r)$, it maps $\d B_r$ into a set of zero area, cf. 
\cite{Reshetnyak}.
So the area of $\S^2\setminus n_u(\d B_r)$ is that of $\S^2$.
Hence there exist finitely many pairwise disjoint connected components $\Lambda_1, ..., \Lambda_M$ of
$\S^2\setminus n_u(\d B_r)$ and $\psi_i\in C^{\infty}_0(\Lambda_i)$ 
taking values in $[0, 1]$ and such that
\begin{equation}
\label{le4-1}
\sum_{i = 1}^M \HH^2\left( \{\psi_i = 1\} \right) \geq \frac{1}{2}\HH^2(\S^2).
\end{equation}
Here $\HH^2$ denotes the $2$-dimensional Hausdorff measure in $\R^3$.
Let $y_i\in \Lambda_i$ and note that $\sum_i\psi_i \leq 1$ pointwise on $\S^2$. Hence, recalling that
$Q(y_i) \geq 1$,
\begin{align*}
\int_{B_r} K\ d\vol_g &\geq \sum_{i = 1}^M \int_{B_r} (\psi_i\circ n_u)\ K\ d\vol_g
\\
&= \sum_{i = 1}^M \int_{B_r} n_u^*(\psi_i\eta_{\S^2})
\\
&= \sum_{i = 1}^M Q(y_i) \int_{\S^2} \psi_i\eta_{\S^2}
\\
&\geq \sum_{i = 1}^M \int_{\S^2} \psi_i\eta_{\S^2}
\\
&\geq \sum_{i = 1}^M \HH^2\left( \{\psi_i = 1\} \right).
\end{align*}
In view of \eqref{le4-1} this contradicts our choice of $r$.
\end{proof}

\begin{proof}[Proof of Proposition \ref{pro1}]
Since $Q$ vanishes
at some point by Lemma \ref{le4}, by Lemma \ref{le3}
it is in fact zero on a whole connected component $\Lambda$ of
the relatively open set $\S^2\setminus n_u(\d B_r)$
and (after possibly redefining $n_u$ on a set of measure
zero) $n_u$ does not take values in $\Lambda$.
\\
We assume without loss of generality that $e_3\in \Lambda$ and we denote by
$\Psi : \S^2\setminus\{e_3\}\to\R^2$
the stereographic projection. Since $\Lambda\subset \S^2\setminus n_u(\d B_r)$
is relatively open, 
there exists $\rho > 0$ such that
$\S^2\cap B_{2\rho}(e_3)$ does not intersect $n_u(\o B_r)$. And 
$\Psi\in C^{\infty}\left(\R^3\setminus \o{B_{\rho}(e_3)}\right)$.
Hence $\Psi\circ n_u\in (W^{1,2}\cap L^{\infty})(U, \R^2)$. Since $\Psi$
is conformal, we deduce from $K > 0$ 
that the Jacobian of $\Psi\circ n_u$ 
does not change its sign and is bounded
away from zero. Hence $\Psi\circ n_u$ is continuous, cf. \cite{Reshetnyak}. 
Hence $n_u$ is continuous as well.
\end{proof}

\subsection{Immersions with continuous normal}

Deviating from our general notation, in the next proposition
$g$ will denote an arbitrary continuous Riemannian metric.

\begin{proposition}\label{baeh}
Let $g\in C^0(\o U, \R_{\sym}^{2\times 2})$ be a continuous
Riemannian metric and let $u\in W^{2,2}_g(U)$. Then $u$ is locally Bilipschitz.
\\
More precisely, there exists $R_0 > 0$ such that
for every $x\in U$
we have
$$
\frac{\la_1(x)}{2} |z-y|\leq |u(z) - u(y)| 
\leq \|\Tr g\|^{1/2}_{L^{\infty}(U)}|z-y|
\mbox{ for all }z, y\in B_{R}(x).
$$
Here $\la_1(x)$ is the smallest eigenvalue of $g(x)$ and $R=\min\{R_0,\frac{1}{8} \dist_{\d U}(x)\}$.
\end{proposition}
\begin{proof}
We follow \cite{H-invertibility}, which in turn follows \cite{MullerSverak}.
Clearly $u$ is Lipschitz, because $|\D u|^2 = \Tr g$ is uniformly
bounded.
\\
Now let $\e\in (0, 1)$ and choose $R_0 > 0$ such that
\begin{equation}
\label{baeh-0}
\osc_{B_{R}(x)} g +
\left(\int_{B_{R}(x)} |\D^2 u|^2\right)^{1/2} < \e
\end{equation}
whenever $x\in U$ and $R\leq 8R_0$ and $B_R(x)\subset U$. 
\\
Fix one such pair $x$ and $R$ and consider two distinct points in $B_{R/8}(x)$.
They are a distance $L\in (0, {R}/{4})$ apart. After rotation and translation,
we may assume that they agree with the origin and
the point $(L, 0)$, respectively.
We may also assume that $\D u(t, 0)$ exists and that
$(\D u)^T(\D u)(t, 0) = g(t, 0)$ for $\mathcal{L}^1$ almost every
$t\in [0, L]$, and that
$$
u(L, 0) - u(0, 0) = \int_0^L \d_1 u(t, 0)\ dt.
$$
(In fact, for almost every $a\in (-R/50, R/50)$
the analogous statements are true with $u(\cdot, a)$ instead of $u(\cdot, 0)$.
So we can apply the following proof to each of these maps and then let $a\to 0$.)
\\
For brevity we write $G = \D u$ and
$
\o G = \frac{1}{L}\int_0^L G(t, 0)\ dt.
$
By the Trace Theorem and Poincar\'e's inequality
there exists a constant $C_0$ such that
\begin{equation}
\label{baeh-1}
\frac{1}{L}\int_0^L |G(t, 0) - \o G|^2\ dt
\leq C_0\int_{(0, L)^2} |\D G|^2\leq C_0\e^2,
\end{equation}
where we have used $\D G = \D^2 u$ and \eqref{baeh-0}.
On the other hand, since $G^T G = g$, we have
$$
|\o G^T\o G - g| \leq (|\o G| + |G|)|G - \o G|
\leq 2\|\Tr g\|_{L^{\infty}}^{1/2}|G - \o G|.
$$
Hence using \eqref{baeh-1} and Jensen's inequality,
\begin{align*}
\frac{1}{L}\int_0^L |\o G^T\o G - g(t, 0)|\ dt
&\leq
2\|\Tr g\|_{L^{\infty}}^{1/2}
\cdot\frac{1}{L}\int_0^L |G(t, 0) - \o G|\ dt
\leq C_2\e,
\end{align*}
where
$
C_2 = 2\|\Tr g\|_{L^{\infty}}^{1/2} C_0^{1/2}.
$
Using \eqref{baeh-0} we conclude that
$$
|\o G^T\o G - g(0, 0)|\leq
\osc_{B_R(x)} g + C_2\e
\leq (C_2 + 1)\e.
$$
Hence choosing 
$$
\e = \frac{g_{11}(0, 0)}{2(C_2 + 1)},
$$
we have
$
|\o G e_1|^2 \geq \frac{|g_{11}(0, 0)|^2}{4}.
$
Thus
\begin{align*}
|u(L, 0) - u(0, 0)| &= 
\Big| \int_0^L \d_1 u(t, 0)\ dt \Big|
= L|\o Ge_1|\geq \frac{|g_{11}(0, 0)|}{2}\cdot L.
\end{align*}
\end{proof}

The hypotheses of the following lemma are satisfied by isometric
immersions with a continuous normal.

\begin{lemma}\label{muuh}
Let $u\in W^{1,\infty}(U, \R^3)$ be an immersion
and assume that its
normal $n_u$ is continuous on $\o U$. If $x\in U$ and
$R\leq\dist_{\d U}(x)$, then
$$
\Big|
n_u(x)\cdot \left( u(z) - u(y) \right)
 \Big| 
\leq \|\D u\|_{L^{\infty}(U)}\cdot\big(\osc_{B_{R}(x)} n_u\big)\cdot|y-z|
$$
for all $z$, $y\in B_{R}(x)$.
\end{lemma}
\begin{proof}
We may assume that $y$ agrees with the origin and $z = (L, 0)$ 
for some $L\in (0, 2R)$. 
As in the proof of Proposition \ref{baeh},
we may also assume that $\D u(t, 0)$ exists for $\mathcal{L}^1$
almost every $t\in [0, L]$, and that
$$
u(L, 0) - u(0, 0) = \int_0^L \d_1 u(t, 0)\ dt.
$$
Hence the claim follows at once from the equation
\begin{align*}
n_u(x)\cdot \left( u(L, 0) - u(0,0) \right) &=
\int_0^L \d_1 u(t, 0)\cdot \left( n_u(x) - n_u(t, 0)\right)\ dt.
\end{align*}
\end{proof}

\subsection{Proof of Theorem \ref{thm1}}

Assume that the hypotheses of Theorem \ref{thm1} are satisfied
and fix a point $x_0\in U$. 
We will prove that $u$ is smooth in a neighbourhood of $x_0$.
By Proposition \ref{pro1} the normal $n_u$ is continuous.
We assume without loss of generality that $n_u(x_0) = e_3$ and we write
$u = \zwo{\Psi}{V}$, where $V = e_3\cdot u$ and $\Psi : U\to\R^2$ is the in-plane component.

\begin{lemma}\label{psibi}
There exists $r > 0$ such that $\Psi$ is
Bilipschitz on $B_r(x_0)$.
\end{lemma}
\begin{proof}
Clearly $\Psi$ is Lipschitz because
so is $u$. Let $R_0$ be as in Proposition \ref{baeh}
and denote by $\la_1$ the smallest eigenvalue of $g(x_0)$.
Let $R\leq R_0$ be such that $B_{R}(x_0)\subset U$ and
$$
\osc_{B_{R}(x_0)} n
\leq\frac{\la_1}{4\|\Tr g\|_{L^{\infty}(U)}^{1/2}}.
$$
Set $r = R/8$. Then by Proposition \ref{baeh} and Lemma \ref{muuh},
for all $y$, $z\in B_r(x_0)$ we have 
\begin{align*}
|\Psi(z) - \Psi(y)|^2 &= |u(z) - u(y)|^2 - |V(z) - V(y)|^2
\\
&\geq \left(\frac{\la_1^2}{4} - \frac{\la_1^2}{16}\right)|z-y|^2
= \frac{3\la_1^2}{16}|z - y|^2.
\end{align*}
\end{proof}

We may assume without loss
of generality that $\Psi(x_0) = 0$.
By Lemma \ref{psibi}, after possibly shrinking $U$ we may assume that
$\Psi$ is a Bilipschitz homeomorphism from $U$ onto $\Psi(U)$
and furthermore that $B = \Psi(U)$ is an open ball centered at the origin.
For the rest of this chapter, the letter $B$ without subindex refers
to this particular ball.
\\
Denote by $\Phi : B\to U$ the inverse of $\Psi$
and define $f = V\circ \Phi$, which is a map
from $B$ to $\R$.
Then $u(U) = \graph f|_B$. For $z\in B$ we define
$$
G(z) = (z, f(z)),
$$
so that $u = G\circ\Psi$.
Denote the 
Riemannian metric on $B$ induced by $G$ by
$$
\t g = (\D G)^T (\D G) = I + \D f\otimes\D f,
$$
and the normal to $G$ by
$\t n_u = \frac{\d_1 G\times \d_2 G}{|\d_1 G\times \d_2 G|}$. We have
\begin{equation}
\label{gk-2}
\t n_u = \frac{(-\D f, 1)^T}{\left( 1 + |\D f|^2 \right)^{1/2}},
\end{equation}
because $\det\t g = 1 + |\D f|^2$.

\begin{lemma}
We have $f\in W^{1, \infty}(B)\cap W^{2,2}(B)$
and $f$ satisfies the prescribed Gauss curvature equation
\begin{equation}
\label{gk-1}
\det\D^2 f = K(\Phi) \cdot (1 + |\D f|^2)^2
\end{equation}
almost everywhere on $B$.
\end{lemma}
\begin{proof}
Clearly, $f\in W^{1, \infty}(B)$. Moreover, $f\in W^{2,2}(B)$ by the chain rule
and since $\Phi$ is Bilipschitz and in $W^{2,2}$.
To prove \eqref{gk-1}, note that
$\d_{\a}\d_{\b} G = (0, 0, \d_{\a}\d_{\b} f)^T$.
Therefore, $\t h = \t n_u \cdot \D^2 G$ satisfies
\begin{equation}
\label{gk-3}
\t h = \frac{\D^2 f}{(1 + |\D f|^2)^{1/2}}.
\end{equation}
Taking determinants in \eqref{gk-3}, we see that
\begin{equation}
\label{gk-1b}
\det\D^2 f = (1 + |\D f|^2) \det\t h.
\end{equation}
Using the chain rule, it is easy to verify that 
\begin{equation}
\label{gk-4}
h = (\D\Psi)^T\ \t h(\Psi)\ (\D\Psi) \mbox{ almost everywhere.}
\end{equation}
A similar relation applies to $g$ and $\t g$. Therefore, using Lemma \ref{egregium},
we see that \eqref{gk-1b} implies \eqref{gk-1}.
\end{proof}

Observe that the right-hand side of \eqref{gk-1} is positive and bounded away from zero and infinity. 
Hence the following lemma implies that $f$ is a (locally) convex function.
\begin{lemma}\label{le-sverak}
Let $c$, $R > 0$ and let $\t f\in W^{2,2}(B_R)$ satisfy $\det\D^2\t f \geq c$ 
almost everywhere on $B_R$.
Then $\t f$ is either locally convex on $B_R$
or it is locally concave on $B_R$.
\end{lemma}
\begin{proof}
This is proven in \cite{sverak}. As observed in \cite{sverak}, the results in \cite{IwaniecSverak} 
(which were conjectured in \cite{sverak})
indeed allow to relax the $W^{2,\infty}$ hypothesis in \cite{sverak} to the
$W^{2,2}$ hypothesis used here.
\end{proof}
\begin{lemma}
We have $f\in C^1(B)$.
\end{lemma}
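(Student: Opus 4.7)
The plan is to transport the continuity of the full-surface normal $n_u$ (already established in Proposition \ref{pro1}) across the graph parameterization $u = G \circ \Psi$, and then read off continuity of $\D f$ from the explicit formula \eqref{gk-2}.

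First, I would observe that $\Phi = \Psi^{-1}$ is continuous (in fact bi-Lipschitz by Lemma \ref{le1} and Clarke's theorem invoked above), so $n_u \circ \Phi$ is continuous on $\Psi(B)$. To identify this composition with $\t n_u$, I would perform the chain-rule computation $\d_\a u = (\d_\a\Psi_1)\, \d_1 G(\Psi) + (\d_\a\Psi_2)\, \d_2 G(\Psi)$ which, via bilinearity of the cross product, gives
\[
\d_1 u \times \d_2 u \;=\; (\det \D \Psi)\,(\d_1 G \times \d_2 G)\circ \Psi \quad \text{a.e.\ on }B.
\]
Since $\det \D \Psi > 0$ by Lemma \ref{le1}, normalization yields $n_u = \t n_u \circ \Psi$ almost everywhere on $B$, equivalently $\t n_u = n_u \circ \Phi$ almost everywhere on $\Psi(B)$. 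Hence $\t n_u$ admits a continuous representative on $\Psi(B)$.

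Second, I would invert \eqref{gk-2} to extract $\D f$. Writing $\t n_u = (a_1, a_2, a_3)$, that formula reads $a_3 = (1 + |\D f|^2)^{-1/2}$ and $(\D f)_i = -a_i/a_3$ for $i=1,2$. After shrinking $B$ if necessary so that $n_u \cdot e_3 > 1/2$ on $B$ (possible because $n_u$ is continuous and $n_u(0) = e_3$), the identity $\t n_u = n_u \circ \Phi$ transfers the same lower bound to $a_3$ on $\Psi(B)$. Consequently $\D f$ agrees almost everywhere with the continuous function $(a_1,a_2,a_3) \mapsto -(a_1,a_2)/a_3$ composed with (the continuous representative of) $\t n_u$, and is therefore continuous on $\Psi(B)$. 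This is exactly the claim $f \in C^1(\Psi(B))$.

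The only delicate point is the orientation check in the cross-product calculation, which is handled automatically by the positivity of $\det \D \Psi$; apart from that, the argument is a routine unfolding of the graph formula. The real work was already carried out in Proposition \ref{pro1}.
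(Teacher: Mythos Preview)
Your argument is correct and follows essentially the same route as the paper: transport the continuity of $n_u$ (from Proposition~\ref{pro1}) through $\Phi$ to obtain continuity of $\t n_u$, then invert \eqref{gk-2} to read off continuity of $\D f$, using that the third component of $\t n_u$ is bounded away from zero. You are simply more explicit than the paper about justifying the identity $\t n_u = n_u\circ\Phi$ and about securing positivity of $\t n_u\cdot e_3$ by shrinking $B$.
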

\begin{proof}
We know from Proposition \ref{pro1}
that $n_u : U\to\S^2$ is continuous. And so is $\Phi$.
Since $\t n_u = n_u(\Phi)$, we see that $\t n_u$ is continuous.
Upon scalar multiplication of \eqref{gk-2} with $e_3$, we have
$$
(1 + |\D f|^2)^{-1/2} = n_u(\Phi)\cdot e_3.
$$
Since $n_u(\Phi)\cdot e_3$ is continuous and strictly positive, we conclude that
$(1 + |\D f|^2)^{1/2}$ is continuous. Hence so is $\D f$, by \eqref{gk-2}.
\end{proof}

The following lemma is \cite[Theorem 1']{NikolaevShefel}; cf. also \cite{Caff2}.

\begin{lemma}\label{le-nish}
Let $R > 0$ and $0 < m < M < \infty$ and let
$\t F : B_R\to [m, M]$. If $\t f\in C^0(\o B_R)$ is a convex 
Aleksandrov solution of
$$
\det\D^2\t f = \t F,
$$
then there exists $p\geq 1$ such that $\t f\in W_{loc}^{2,p}(B_R)$. Moreover,
$p\to\infty$ as $M/m\to 1$.
\end{lemma}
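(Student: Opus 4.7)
The plan is to follow Caffarelli's $W^{2,p}$ regularity theory for the Monge-Amp\`ere equation. The key ingredients are the engulfing property of sections of strictly convex Aleksandrov solutions, together with a perturbation scheme that compares $\p$ to solutions of the model equation $\det\D^2 P \equiv \text{const}$, whose convex solutions on a given section are paraboloids.

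The first step is to establish strict convexity of $\p$ on $B_R$. From the two-sided bound $m\le\t F\le M$, Caffarelli's strict convexity theorem rules out any non-trivial linear piece in the graph of $\p$: such a piece would, via an extremal point argument, force the Monge-Amp\`ere measure to vanish on a set of positive Lebesgue measure, contradicting the lower bound $\t F\ge m>0$. With strict convexity in hand, the sections
$$S_h(x_0) = \{ x\in B_R : \p(x) < \p(x_0) + \ell_{x_0}(x-x_0) + h \},$$
where $\ell_{x_0}$ is a supporting affine function of $\p$ at $x_0$, are bounded convex sets compactly contained in $B_R$ for $h$ small, and they satisfy the engulfing property.

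The second step is normalization together with a distribution-function estimate. Fix an interior point $x_0$ and take $h$ small enough that $S_h(x_0)\Subset B_R$. By John's lemma there is an affine map $T$ with $|\det T|\sim h^{-1}$ such that $B_1\subset T(S_h(x_0))\subset B_2$. The rescaling $\t\p(y) = h^{-1}\bigl(\p(T^{-1}y)-\ell_{x_0}(T^{-1}y)-\p(x_0)\bigr)$ is again a convex Aleksandrov solution of a Monge-Amp\`ere equation whose right-hand side takes values in $[m,M]$, since this class of affine transformations preserves the equation. The decisive quantitative input is that when $M/m\le 1+\delta$ with $\delta$ small, $\t\p$ is uniformly $C^0$-close on $B_1$ to a paraboloid $P$ satisfying $\det\D^2 P\equiv\text{const}$. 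A Calder\'on--Zygmund type decomposition of the set where the opening of osculating paraboloids to $\p$ is large then produces geometric decay of the distribution function of $\|\D^2\p\|$, with a rate determined by $\delta$, which yields $\p\in W^{2,p(\delta)}_{loc}(B_R)$ with $p(\delta)\to\infty$ as $\delta\to 0$.

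The main obstacle is precisely this quantitative tracking of the exponent $p$ in terms of $M/m$. The classical Caffarelli result giving $W^{2,p}_{loc}$ for every finite $p$ requires continuity of the right-hand side, whereas here only $L^\infty$ bounds are available. One must therefore isolate how the parameter $M/m$ enters the doubling estimate for the distribution function and verify that no implicit dependence on a modulus of continuity of $\t F$ creeps into the iteration. This refinement is exactly the content of Nikolaev--Shefel's Theorem~$1'$ and of Caffarelli's sharp $W^{2,p}$ papers, which is why we invoke them directly rather than reproving this step here.
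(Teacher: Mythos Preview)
The paper does not prove this lemma at all: it simply records it as \cite[Theorem 1']{NikolaevShefel} (with a cross-reference to \cite{Caff2}) and moves on. Your proposal therefore matches the paper in that you, too, ultimately defer the decisive quantitative step to exactly these references; the difference is only that you supply a correct outline of Caffarelli's $W^{2,p}$ scheme (strict convexity, sections, John normalization, paraboloid comparison, Calder\'on--Zygmund iteration) as motivation, which the paper omits.
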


\begin{proof}[Proof of Theorem \ref{thm1}]
Denote the right hand side of \eqref{gk-1} by $F$.
Since $F$ is continuous, after possibly shrinking $B$ (and $U$),
the oscillation of $F$ is as small as we please on $B$,
and Lemma \ref{le-nish} implies that there
exists $p > 2$ such that $f\in W_{loc}^{2,p}(B)$.
\\
But $f\in W^{2,p}_{loc}(B)$ implies that  $G\in W_{loc}^{2,p}(B)$.
And \eqref{gk-3} implies that
$\t h\in L_{loc}^p(B)$. Since $\D\Psi\in L^{\infty}(U)$,
from \eqref{gk-4} we deduce that $h\in L^p_{loc}(U)$.
\\
Since the Christoffel symbols are 
smooth and $\D u$ is bounded, we deduce from \eqref{gk-5} that $u\in W_{loc}^{2,p}(U)$.
In particular, by Morrey-Sobolev embedding, there exists a constant $\delta\in (0, 1)$
such that $\Psi\in C^{1,\delta}(U)$ and $f\in C^{1,\delta}(B)$. Since
the Gauss curvature $K$ is Lipschitz on $U$, we have $F\in C^{0,\delta}(B)$.
\\
E.g. by the results in \cite{Schulz-MZ1982}, 
we therefore deduce from $\det\D^2 f = F$ that $f\in C^{2,\delta}(B)$.
Hence \eqref{gk-3} shows that $\t h\in C^{0, \delta}(B)$.
Hence $h\in C^{0, \delta}(U)$ by \eqref{gk-4}.
Thus \eqref{gk-5} implies $u\in C^{2,\delta}(U)$ for some
$\delta\in (0, 1)$.
\\
Hence, for every constant unit vector $e\in\R^3$, the function $v = e\cdot u$
is a $C^{2,\delta}$ solution of the Darboux equation
$$
\det\left( \D^2 v - \Gamma\cdot\D v \right) = K |g| \left( 1 - g^{-1} : (\D v\otimes\D v) \right)
$$
on $U$; here we write $(\Gamma\cdot\D v)_{ij} = \Gamma_{ij}^k \partial_k v$.
This equation is elliptic with respect to $v$. Since $g^{-1}$, $\Gamma$ and $K$ are smooth,
and since $v\in C^{2, \delta}(U)$, we conclude by standard theory 
that $v\in C^{\infty}(U)$ (cf. e.g. \cite{GT01}).
\end{proof}

In closing, note that if merely $g \in C^{2,\alpha}$ for some $\a\in (0, 1)$,
then our arguments show that there is $\delta > 0$ such that $u \in C^{2,\delta}$
for some $\delta\in (0, 1)$.
And if $g \in C^{k,\alpha}$ for some $k \geq 3$, then 
$u \in C^{k+2,\alpha}$, by standard elliptic regularity.

\subsection{A consequence of Theorem \ref{thm1}}

A question arising in problems in thin film elasticity such as the one
addressed in the second part of this paper regards
the existence of solutions $w : U\to\R^3$ of the following 
degenerate PDE system on $U$:
\begin{equation}
\label{liniso}
\d_{\a} u\cdot\d_{\b} w + \d_{\b} u\cdot\d_{\a} w 
= q_{\a\b}\mbox{ for }\a, \b = 1, 2.
\end{equation}
Here $u : U\to\R^3$ is a given $W^{2,2}$ immersion and
$q : U\to\R^{2\times 2}_{\sym}$ is given.
\\
If $u$ is intrinsically convex, a key step in solving \eqref{liniso}
is Theorem \ref{thm1}, as it ensures the ellipticity of the underlying equation.
The other key step is
\cite[Theorem 1.1]{alessandrini}
about unique continuation for elliptic PDE with irregular coefficients.
Combining these two, we obtain the following result:
\begin{proposition}\label{metrch}
Let $\alpha\in (0, 1)$,
let $U\subset \R^2$ be a simply connected domain with $C^2$ boundary
and let $g\in C^{2,\alpha}(\o U, \R^{2\times 2}_{\sym})$ be a Riemannian metric
whose Gauss curvature is positive on $\o U$. Assume that
$u\in W^{2,2}_g(U)\cap W^{2,\infty}(U, \R^3)$.
Then, for every $q \in  W^{1,2} (U, \R^{2\times 2}_{\sym})$,
the system \eqref{liniso}
admits a solution $w\in W^{1,2}(U, \R^3)$.
\end{proposition}
\begin{proof}
A proof for the case $u \in W^{3,\infty}(U,\R^3)$
can be found in \cite[Lemma 5.6]{Lewicka1}; an earlier
proof of a similar (but dual) statement can be found in \cite{LodsMardare}.
Both proofs combine arguments by Weyl presented in \cite{nirenberg} with
a unique continuation result. So do we in the following proof.
\\
As before, $h_{\a\b} = n_u\cdot\d_{\a}\d_{\b} u$
denotes the second fundamental form of $u$. By
$(b_{\a\b})$ we denote the inverse matrix to $(h_{\a\b})$.
In fact, by Theorem \ref{thm1} the matrix $(h_{\a\b})$ is positive definite
everywhere or negative definite everywhere. We assume the former.
\\
We now argue as in \cite[Section 6]{H-stationary} and
introduce the linear operator
$$
T : W^{1,2}(U)\to \left( W^{1,2}_0(U) \right)'
$$
by setting
$$
(T\psi)(\p) = \int_{U} (b_{\a\b}\d_{\a}\psi \d_{\b}\p - 2 H \psi \p )\sqrt{|g|}.
$$
Above, the prime denotes the topological dual and $H$ is the mean curvature of $u$.
For our purposes it is enough to know that $H\in L^{\infty}(U)$ is bounded
from below by a positive constant.
We claim that $T$ is surjective.
\\
In order to prove this, it suffices to show that the dual operator
to $T$
is injective. Let $\p\in W^{1,2}_0(U)$ be such that
$$
\int_{U} (b_{\a\b}\d_{\a}\psi\d_{\b} \p - 2 H \psi \p )\sqrt{|g|} = 0 \mbox{ for all }\psi\in W^{1,2}(U).
$$
We extend $g$ and  $b$ to a simply connected domain $\t U$ containing $\o U$,
in such a way that  $b\in L^{\infty}(\t U)$ is positive definite on $\t U$.
Since $\p\in W^{1,2}_0(U)$, its extension by zero (still denoted  by $\p$) belongs
to $W^{1,2}(\t U)$. Since the restriction of $\psi\in W^{1,2}(\t U)$
to $U$ belongs to $W^{1,2}(U)$, we have
$$
\int_{\t U} (b_{\a\b}\d_{\a}\psi\d_{\b} \p - 2 H \psi \p )\sqrt{|g|} = 0
\mbox{ for all }\psi\in W^{1,2}(\t U).
$$
Since $\p = 0$ on $\t U\setminus\o U$, this implies that
$\p = 0$ on $\t U$, due to \cite[Theorem 1.1]{alessandrini}.
This proves that the dual operator is injective,
hence that $T$ is surjective.
\\
As shown in \cite{nirenberg}, 
the existence of a solution $w\in W^{1,2}(U, \R^3)$ of
\eqref{liniso} is equivalent
to the existence of a solution $\rho\in W^{1,2}(U)$ of $T\rho = f$,
for a suitable $f\in \left( W^{1,2}_0(U) \right)'$ which can
be computed from $u$ and $q$.
\end{proof}

\section{Homogenization for shells} 

We begin by introducing some further notation. Set $Y=[0,1)^2$ and
$\calY = \R^2/\Z^2$. For all $k\in\N\cup\{0\}$ the
set of all $f\in C^k(\R^2)$ with $D^{\a}f(\cdot + z) = D^{\a}f$
for all $z\in\Z^2$ and all multiindices $\a$ of order up to $k$
is denoted by $C^k(\calY)$.
\\
$C^k$ functions with compact support are denoted by a subindex $0$.
For any open set $A$,
we denote by $L^2(\calY)$, $W^{1,2}(\calY)$ and $W^{1,2}(A{\times}\calY)$ the Banach spaces obtained as
closures of $C^\infty(\calY)$
and $C^\infty(\bar A,C^\infty(\calY))$ with respect to the norm in $L^2(Y)$,
$W^{1,2}(Y)$ and $W^{1,2}(A{\times} Y)$,
respectively. An additional dot (e.g. in
$\dot{L}^2(\calY)$) denotes functions with average zero over $\calY$.

\subsection{Surfaces and shells in $\R^3$}

Let $\kappa\in (0, 1)$ and let $\omega\subset\R^2$ be a bounded domain with 
$C^{3, \kappa}$ boundary.
Set $I = (-\tfrac{1}{2},\tfrac{1}{2})$ and $\Omega^h = \omega\times(hI)$, and 
$\Omega = \omega\times I$.
From now on, $S\subset\R^3$ denotes (the relative interior of)
an embedded compact connected oriented surface with boundary.
For convenience we assume that $S$ is parametrized by a single chart.
More precisely, we assume that there exists an open set $V\subset\R^3$ containing
the closure of $S$ and an open set $U\subset\R^3$ containing
$\o\omega\times\{0\}$
and a $C^{3,\kappa}$ diffeomorphism $\Phi : V\to U$ such that
$$
\Phi(S) = \omega\times\{0\}.
$$
Then $\xi : \omega\to S$, defined by $\xi(z) = \Phi^{-1}(z, 0)$,
is a global $C^{3,\kappa}$ chart for $S$.
\\
By $W^{2,2}_{\iso}(S)$ 
we denote the $W^{2,2}$ isometries of the surface $S$ into $\R^3$. 
The space $W^{2,\infty}_{\iso}(S)$ is defined similarly.
Clearly
$
u \in W^{2,2}_{\iso}(S)
$
is equivalent to $u \circ \xi\in W^{2,2}_g (\omega)$,
for $g = (\D\xi)^T(\D\xi)$ the 
Riemannian metric on $\omega$ induced by $\xi$. 
\\
As usual, $TS$ denotes the tangent bundle over $S$ and $NS$ the normal bundle.
A basis of the tangent space $T_x S$ is given by
$$
\tau_{\a}(x) = (\d_{\a}\xi)(\Phi(x))\mbox{ for all }x\in S,
$$
where $\a = 1, 2$.
We view $T_xS$ as a subspace of $\R^3$ and write
$\sigma\cdot\tau$ to denote the scalar product on both spaces.
\\
The dual basis of the tangent space $T_x S$
is denoted by $(\tau^1(x), \tau^2(x))$. So by definition
$$
\tau^{\a}\cdot\tau_{\b} = \delta_{\a\b} \mbox{ on }S,
$$
where $\delta_{\a\b}$ is the Kronecker symbol.
We frequently identify $T_x^* S$ with $T_x S$ via the scalar product.
\\
Define the normal $n : S\to\mathbb{S}^2$ by
$$
n = \frac{\tau_1\times\tau_2}{|\tau_1\times\tau_2|}.
$$
The orthogonal projection onto $T_xS$ is
$$
T_S(x) = I - n(x)\otimes n(x).
$$
The tensor products $TS\otimes TS$ etc. are defined fiberwise.
$T_x^*S\otimes T_x^*S$ will be regarded as a subspace of $\R^{3\times 3}$.
If $E$ and $F$ are vector spaces (or bundles) then
the space of all symmetric products
$$
a\odot b := \frac{1}{2}\left( a\otimes b + b\otimes a\right),
$$
with $a\in E$ and $b\in F$
is denoted by $E\odot F$. 
\\
Sections $B$ of $T^*S\otimes T^*S$ will frequently
be regarded as maps from $S$ into
$\R^{3\times 3}$ via the embedding $\iota$ defined by
	$
	\iota(B) = B(T_S, T_S).
	$
	By definition, $B(T_S, T_S) : S\to\R^{3\times 3}$ takes the vector fields
	$v, w : S\to\R^3$ into the function $x\mapsto B(x)(T_S(x)v(x), T_S(x)w(x))$.
	\\
    For any vector bundle $E$ over $S$ we denote by $L^2(S, E)$ the space
	of all $L^2$-sections of $E$. The spaces $W^{1,2}(S, E)$ etc. are defined similarly.
	For any vector bundle $E$ over $S$ with fibers $E_x$,
	we denote by $L^2(\Y, E)$ the vector bundle over $S$ with
	fibers $L^2(\Y, E_x)$. The bundles $W^{1,2}(\Y, E)$ etc. are defined similarly.
	For example, $L^2$-sections of the bundle $W^{1,2}(\Y, TS)$ are given by
\begin{eqnarray*}
	& & L^2(S, W^{1,2}(\Y, TS)) = \\ & &\hspace{+5ex}\{Z\in L^2(S, W^{1,2}(\Y, \R^3)) : Z(x)\in W^{1,2}(\Y, T_xS)\mbox{ for 
a.e. }x\in S\}.
\end{eqnarray*}
	For a function $f : S\to\R$ its differential $df$ is given by
	$
	df(x)\tau = \D_{\tau} f(x)
	$
	for all $\tau\in T_xS$.
	Here $\D_{\tau} f$ denotes the directional derivative of $f$ in direction 
	of the tangent vector $\tau$. We extend these definitions componentwise to maps into $\R^3$. By $\D$ we denote the usual gradient on $\R^3$ (or on $\R^2$).
\\
As usual, the Weingarten map $\AA$ of $S$ is the differential of the normal, i.e.,
$$
	\AA(x)\tau = (\D_{\tau} n)(x) \mbox{ for all }x\in S,\ \tau\in T_x M.
$$
We extend $\AA(x)$ trivally to $\R^3$
by setting $\AA(x) = \AA(x)\ T_S(x)$.
\\
For an immersion $u : S\to\R^3$ denote by $\AA_u$ the Weingarten map for
the surface $u(S)$.
We define its pullback to $S$ by setting
$$
(u^* \AA_u)\tau = u^*\left( \AA_u D_{\tau}u \right)
$$
for all smooth
tangent vector fields $\tau$ to $S$. Here
by definition, $u^* (D_{\sigma} u) = \sigma$ for all smooth tangent vector fields $\sigma$
to $S$. As in \cite{FJMMshells03} we will encounter the relative Weingarten map
$$
\AA^r_u = u^*\AA_u - \AA.
$$
The nearest point retraction $\pi$ of a tubular neighbourhood of $S$ onto $S$ satisfies
$
\pi (\hx + t\na(\hx)) = \hx 
$
for small $|t|$ and all $x\in S$.
After rescaling the ambient space, we may assume that the curvature of $S$ is as small as we please.
Therefore, we may assume without loss of generality that $\pi$ is
well-defined on a domain containing the closure of the set
$\{\hx+t\na (\hx): \ \hx \in S, -1/2 < t < 1/2 \}$,
and that
$
|Id + t\AA(\hx)|\in (1/2, 3/2)
$
for all $t\in (-\tfrac{1}{2}, \tfrac{1}{2})$ and all $\hx\in S$.
\\
For a subset $\t S \subset S$ and $h\in (0,1]$ we define
$
\t S^h=\{\hx + t\na (\hx): \ \hx \in\t S,\ -h/2 < t < h/2 \}.
$
In particular, the whole shell is, by definition,
$$
S^h = \left\{\hx+t\na (\hx): \ \hx \in S\mbox{ and } t\in (-\frac{h}{2}, \frac{h}{2})\right \}.
$$
We introduce the map $r = \Phi\circ\pi$.
Moreover, we introduce the function $t : S^1\to\R$ by setting
$t(x) = (x - \pi(x))\cdot n(x)$ for all $x\in S^1.$
Then we have the following identity on $S^1$, cf. \cite{Hornungvel12}:
\begin{equation} \label{id1}
d\pi = T_S(\pi) \left( I + t\AA(\pi)T_S(\pi)\right)^{-1}.
\end{equation}
(Here and elsewhere we write $T_S(\pi)$ instead of $T_S\circ\pi$ etc.)
Hence there exists a constant $C$ depending only on $S$
such that
\begin{equation}
\label{Dpi}
\left|
d\pi - (I - t\AA(\pi))T_S(\pi)
 \right|\leq Ct^2\mbox{ on }S^1.
\end{equation}
Abusing notation, maps $f : S \to \R^k$ will often
be extended to $S^1$ by setting
$f = f\circ \pi$. We extend $r$, $T_S$ and $\AA$ in this way, too.
\\	
For functions $f\in L^2(S, W^{2,2}(\Y))$ the expression $\Hessy f$ is the section of
the bundle $L^2\left( \Y, TS\odot TS \right)$ over $S$ given by
$$
(\Hessy f)(x, y) = 
\d_{y_{\a}}\d_{y_{\b}} f(x, y)\ \tau^{\a}(x)\odot\tau^{\b}(x),
$$
where $(\D_y^2 f)_{\a\b} = \d_{y_{\a}}\d_{y_{\b}}f$.
For $v\in L^2(S, W^{1,2}(\Y; \R^2))$ we define
the section $\Defy v$ of the bundle $L^2(\Y, T^*S\odot T^*S)$ by
$$
(\Defy v)(x, y) = (\sym\D_y v(x, y))_{\a\b}\tau^{\a}(x)\odot\tau^{\b}(x).
$$
Here and elsewhere $\D_y$ is the 
gradient in $\Y$ with respect to the variable $y$
(and not some directional derivative).

We define the map $\Xi : \omega\times\R\to\R^3$ by
$$
\Xi(z', z_3) = \xi(z') + z_3n(\xi(z')) \mbox{ for all $z'\in\omega$ and }z_3\in\R.
$$
We define the diffeomorphism $\Theta^h : S^h \to S^1$  by
\begin{equation*} 
\Theta^h = \pi + \frac{t}{h}\ n.
\end{equation*}
Using (\ref{id1}) we can see that
\begin{equation}
\label{dphi}
\nabla\Theta^h = 
\left(T_S + \frac{1}{h}(n\otimes n + t\AA)\right)(I + t\AA)^{-1}
\mbox{ on }S^h.
\end{equation}
For a deformation $u : S^h\to\R^3$ 
its rescaled version $y : S^1\to\R^3$ is defined by
$ 
\ya(\Theta^h) = \ua \mbox{ on }S^h.
$  
We also define the rescaled gradient $\D_h y$ of $y$ by the condition
\begin{equation}
\label{defDh}
(\nabla_h y)\circ \Theta^h = \nabla u \mbox{ on }S^h.
\end{equation}

\subsection{Two-scale convergence on shells}\label{Twoscale}

Recall that $r = \Phi\circ\pi$.
A sequence $(f^h)\subset L^2(S^{1})$ is said to converge weakly two-scale
on $S^1$ to the function $f\in L^2(S^{1},L^2(\calY))$ as $h\to 0$, provided
that the sequence $(f^h)$ is bounded in $L^2(S^{1})$ and
\begin{equation}\label{2skalen}
	\lim_{h\to 0}\int_{S^{1}} f^h(x)\,\rho(x, r(x)/\e)\ud
	x=\int_{S^{1}}\int_{\Y}f(x,y)\,\rho(x,y)\ud y\ud x,
\end{equation}
for all $\rho\in C_0^0(S^{1},C^0(\calY))$.
We write $f^h\wtto f$ to denote weak two-scale convergence.
\\ 
Defining $\t f^h = f^h\circ\Xi$ and $\t f(z, y) = f(\Xi(z), y)$,
and taking
$$
\t\rho(z, y) = \rho(\Xi(z), y)|\det\D\Xi(z)|
$$
a change of variables shows that \eqref{2skalen} is equivalent to
\begin{equation*}
\int_{\Omega} \t f^h(z)\t\rho(z, z'/\e)\ dz \to
\int_{\Omega} \int_{\Y} \t f (z, y)\t\rho(z,y)\ dy\ dz,
\end{equation*}
where $z'$ is the projection of $z$ onto $\R^2$. Hence $f^h\wtto f$ on $S^1$ if and only if
$\t f^h\wtto \t f$ on $\Omega$ in the usual sense.
When $f^h : S\to\R$, then $f^h\wtto f$ on $S$ means, by definition, that the trivial extensions converge weakly two-scale on $S^1$.
In particular, $f^h\wtto f$
on $S$ if and only if $\t f^h\wtto \t f$ on $\omega$.
All these definitions are extended componentwise
to vector-valued maps. For sections $q$, $q^h$ 
of $T^*S\odot T^* S$, we say $q^h\wtto q$
if $q^h(\tau, \sigma)\wtto q(\tau, \sigma)$ for all $\tau, \sigma\in C^1(S, TS)$.
A similar definition applies to other bundles.
\\

{\bf Remarks and Definition.}
\begin{enumerate}[(i)]
\item If $(f^h)\subset L^2(S^{1})$ is bounded, then it has a
subsequence which converges weakly two-scale to some $f \in
L^2(S^{1},L^2(\calY))$. These and other facts can be deduced from the corresponding
results on planar domains, cf. \cite{Allaire-92, Visintin-06}.
\item As usual, an important step in the proof of Theorem 
\ref{tm:glavni} will be to characterize the possible two-scale limits
of scaled gradient fields. For this purpose, 
for $\gamma\in [0, \infty]$
we introduce the following subspaces $\mathcal H_{\gamma}$ of $L^2(S \times I \times \calY, \R^{3 \times 3})$:
\begin{itemize}
\item The space $\mathcal H_0$ is the set of all matrix fields of the form
$\left(\nabla_y\wa_1, \wa_2\right)$, where $\wa_1\in L^2(S,\dot{W}^{1,2}(\mathcal Y,\R^3))$
and $\wa_2\in L^2(S \times I \times \Y, \R^3))$.
\item For $\gamma\in (0, \infty)$, the space $\mathcal H_{\gamma}$ 
is the space of all matrix fields of the form 
$\left(\,\nabla_y\wa_1,\tfrac{1}{\gamma}\partial_3\wa_1\,\right)$, where
$\wa_1\in L^2(S,\dot{W}^{1,2}(I\times\mathcal Y, \R^3))$.
\item The space $\mathcal H_{\infty}$ is the set of all matrix fields of the form 
$(\nabla_y\wa_1,\wa_2)$, where
$\wa_1\in L^2(S \times I, \dot{W}^{1,2}(\mathcal Y, \R^3))$
and $\wa_2\in L^2(S \times I,\R^3)$.
\end{itemize}
\item As shown in \cite[Lemma 4.3]{Hornungvel12}, if $(w^h)\subset W^{1,2}(S^1, \R^3)$
is such that $(w^h)$ and $(\D_h w^h)$ are bounded in $L^2$,
then one can extract a subsequence such that there exists a field
$\bs H_\gamma \in\mathcal H_{\gamma}$ and a map $\wa_0 \in W^{1,2}(S,\R^3)$
with
\begin{equation*}
\nabla_h\wa^h\wtto d\wa_0\ T_S + 
\sum_{i, j = 1}^3(H_\gamma(\pi, t, y))_{ij}\tau^i\otimes\tau^j
\end{equation*}
weakly two-scale on $S^1$,
where $\tau^3 = n$. 
More precisely, $\wa_0$ is the weak limit in $W^{1,2}(S)$ of
$\int_I \wa^h(\cdot + tn)dt$.
\end{enumerate}

\subsection{Energy functionals}

From now on $\e : (0, 1)\to (0, 1)$ denotes a function such
that the limit
$$
\gamma = \lim_{h \to 0} \frac{h}{\e(h)}
$$
exists in $[0,\infty]$.
If $\gamma = 0$, then we will assume, in addition, that $\e^2(h) \ll h$. 
We will often suppress the explicit $h$-dependence in the notation and
simply write $\e$ instead of $\e(h)$.
\\
Let us now fix an energy density function
$$
W : S^{1}\times\R^2\times \R^{3 \times 3}\to [0,\infty]
$$
with the usual properties: 
$W$ is normalized such that
$W(x, y, I) = 0$; moreover, $W$
is continuous in the first argument, $\Y$-periodic in the second
and frame indifferent in the third.
Regarding its growth, we assume that there exist constants $0 < \eta_1\leq\eta_2$ and $\rho > 0$ such that
for all $(x, y)\in S^1\times\Y$ we have
\begin{align*}
	W(x, y, F) &\geq \eta_1\dist^2(\boldsymbol F,\SO 3),\quad\text{ for all }F\in\R^{3 \times 3},
	\\
	W(x, y, F) &\leq \eta_2\dist^2(\boldsymbol F,\SO 3),\quad\text{ for all }F\in\R^{3\times 3}\mbox{ with }
	\dist^2(\boldsymbol F,\SO 3)\leq\rho.
\end{align*}
Finally, we assume that for each $(x, y)\in S^1\times\Y$ there exists a
quadratic form $\mathcal Q(x, y, \cdot) : \R^{3\times 3}\to\R$
such that
\begin{equation}
	\label{esup}
	\esssup_{(x,y) \in S^1 \times \calY}
	\frac{|W(x,y,I+G)-\mathcal{Q}(x,y,G)|}{|G|^2}\to 0 \mbox{ as }G\to 0.
\end{equation}

The following properties of $\mathcal{Q}(\cdot,\cdot,\cdot)$ follow at once from those of $W$ (cf. \cite[Lemma~2.7]{Neukamm-11}): 
the map $\mathcal{Q}(\cdot,y,\cdot)$ is continuous for almost every 
$y\in\R^2$ and the map $\mathcal{Q}(x,\cdot, G)$ is $Y$-periodic
	for all $x\in\Omega$ and all $G\in\R^{3\times 3}$. Moreover,
	for all $x\in\Omega$ and almost every $y\in\R^2$, the map $\mathcal{Q}(x,y,\cdot)$ is quadratic, and for all $\boldsymbol G\in\R^{3\times 3}$ we have
\begin{equation*}
			\eta_1|\sym \boldsymbol G|^2\leq \mathcal{Q}(x,y,\boldsymbol G)=\mathcal{Q}(x,y,\sym \boldsymbol G)\leq 
\eta_2|\sym \boldsymbol G|^2.
		\end{equation*}

The elastic energy per unit thickness
of a deformation $u^h \in W^{1,2}(S^h,\R^3)$ of the shell $S^h$ is given by
$$
J^h(u^h)=\frac{1}{h}\int_{S^h} W\left( \Theta^h(x), r(x)/\e, \nabla u^h(x) \right)\ dx.
$$
In order to express the elastic energy in terms of the $y$ variables, we associate with $y : S^1\to\R^3$
the energy
\begin{eqnarray*}
	I^h(y) &=& \int_{S^1} W\left( x, r(x)/\e, \nabla_h y(x) \right)
	\det\left(I+t(x) \AA(x) \right)^{-1}\ dx \\
	&=& \int_S \int_I W\left( x+t n(x), r(x)/\e, \nabla_h y(x + tn(x)) \right)\ dt\ d\vol_S(x).
\end{eqnarray*}
By a change of variables we have
$$
J^h(u^h) = \frac{1}{h} \int_{S^1} W\left( x, r(x)/\e, \nabla_h y^h(x) \right)\ \left|\det\nabla (\Theta^h)^{-1}(x)
\right|\ dx,
$$
where again $y^h (\Theta^h)=u^h$. 
Using \eqref{dphi} we see that there exists a constant $C$ such
that
\begin{equation*}  
|J^h(u^h) - I^h(y^h)|\leq Ch I^h(y^h).
\end{equation*}

\subsection{Asymptotic energy functionals}

Next we will introduce the asymptotic energy functionals.
In order to do so, we need the definition of the relaxation fields and the cell formulae.
Recall that $a\odot b = \frac{1}{2}(a\otimes b + b\otimes a)$.
We make the following definitions:
\\
Set $D(\mathcal U_0)  =
\dot{W}^{1,2}(\mathcal Y,\R^2)\times \dot W^{2,2}(\Y)\times L^2(I\times\mathcal Y,\R^3)$
and for $(\zeta, \varphi, \mu)\in L^2(S, D(\mathcal U_0))$
define
\begin{align*}
	\mathcal{U}_0 (\zetaa,\varphi, \mu) &= \Defy\zeta + 2\mu_{\a} \tau^{\a}\odot n + \mu_3 n\odot n - t\Hessy\varphi.
\end{align*}
Set $D\left( \mathcal U_{\infty} \right) =
L^2(I, \dot{W}^{1,2}(\mathcal Y,\R^2))\times L^2(I, \dot W^{1,2}(\Y))\times L^2(I, \R^3)$
and for $(\zeta, \rho, c)\in L^2\left(S, D\left( \mathcal U_{\infty} \right)\right)$
define
$$
\mathcal{U}_\infty(\zeta,\rho, c) = \Defy\zeta +
2(\d_{y_{\a}}\rho + c_{\a})\tau^{\a} \odot n + c_3 n\odot n.
$$
For $\gamma\in (0, \infty)$ set $D(\mathcal U_{\gamma}) =
\dot{W}^{1,2}(I\times \mathcal Y;\R^2)\times \dot{W}^{1,2}(I\times \mathcal Y)$
and for $(\zeta, \rho)\in L^2\left(S, D(\mathcal U_{\gamma})\right)$
define
$$
\mathcal{U}_\gamma (\zeta, \rho) = \Defy\zeta +
(\d_{y_{\a}}\rho + \frac{1}{\gamma}\d_3\zeta_{\a}) \tau^{\a}\odot n
+ (\frac{1}{\gamma}\d_3\rho) n\odot n.
$$
By embedding $D(\U_0)$ trivially into $L^2(S, D(\U_0))$,
we can regard $\U_0$ as a map from $D(\U_0)$ into 
$L^2(S, L^2(I\times\mathcal Y, \R^{3\times 3}_{\sym}))$.
\\
For each $x\in S$ the fiberwise action $\U_0^{(x)}$ of $\U_0$ is
$$
\U_0^{(x)}(\zeta, \varphi, \mu) = (\Defy\zeta)(x) 
+ 2\mu_{\a} \tau^{\a}(x)\odot n(x) + \mu_3 n(x)\odot n(x)
- t(\Hessy\varphi)(x),
$$
for all $(\zeta, \varphi, \mu)\in D(\mathcal U_0)$. 
\\
For each $x\in S$ we define $L_0^{(x)}(I\times\Y) = \U_0^{(x)}(D(\U_0))$, i.e.,
$$
L_0^{(x)}(I\times\Y) = \left\{ \U_0^{(x)}(\zeta, \varphi,\mu) : 
(\zeta, \varphi, \mu)\in D(\mathcal U_0)\right\}.
$$
This is a subspace of $L^2(I\times\Y, \R^{3\times 3}_{\sym})$.
We denote by
$L_0(I\times\Y)$ the vector bundle over $S$ with fibers $L_0^{(x)}(I\times\Y)$;
in what follows we will frequently omit the index $(x)$ for the fibers.
The bundles $L_{\gamma}(I\times \Y)$ , for $\gamma \in (0,\infty]$ are defined analogously.
The elements of these spaces are the relaxation fields.
\\
For $\gamma \in [0,\infty]$ and $x \in S$, we define
$
 \mathcal{Q}_{\gamma} (x, \cdot) : T_x^*S \otimes T_x^* S \to\R
$
by setting
\begin{equation*}
\mathcal{Q}_{\gamma}(x, q) = \inf
\int_I\int_{\calY} \mathcal{Q}\Big(x + tn(x), y, p + tq + U(t, y) \Big)\ dy \ dt.
\end{equation*}
Here the infimum is taken over all
$U \in L^{(x)}_{\gamma} (I \times \calY )$ and all
$p \in T_x^*S \otimes T_x^* S$.
\\
Notice that 
$\mathcal{Q}_{\gamma}(x, q)=\mathcal{Q}_{\gamma}(x, \sym q)$
for all $x \in S$ and all $q\in T_x^*S \otimes T_x^* S$.
For $x\in S$ and $q\in T_x^*S\odot T_x^* S$ define  the homogeneous
relaxation (cf. \cite{Lewicka1}):
\begin{equation*}
	\tilde{\mathcal{Q}}(x, t, q) =
	\min_{M\in \R^{3\times 3}_{\sym}} \{ \mathcal{Q}(x+tn(x), M): M(T_S, T_S) = q(T_S, T_S)\}.
\end{equation*}

Then it is easy to see that
\begin{align*}
	\mathcal{Q}_0(x,q) &= \inf \int_{I \times \mathcal{Y}}
	\tilde{\mathcal{Q}}\Big(x+tn(x),y, p + tq
	+ (\Defy\zeta)(x) - t(\Hessy\varphi)(x) \Big)\ dt\ dy,	
\end{align*}
where the infimum is taken over all $\zeta\in \dot W^{1,2}(\Y,\R^2)$, all $\varphi\in\dot W^{2,2}(\Y)$ and all $p \in T_x^*S 
\odot T_x^* S$.
In the case when the material is homogeneous
in the thickness direction, we have
$$
\mathcal{Q}_0(x,q) =
\frac{1}{12} \inf\left\{
\int_{\mathcal{Y}} \tilde{\mathcal{Q}} (x,y,q+ (\Hessy\varphi)(x) ) \ud y :
\varphi \in \dot{W}^{2,2} (\mathcal Y)\right\}.
$$
As in \cite{NeuVel-12}, for all $x \in S$ and all 
$q \in T_x^*S\odot T_x^*S$ we have
\begin{equation*} \lim_{\gamma \to \infty} \mathcal{Q}_{\gamma}(\hx,q)=\mathcal{Q}_{\infty}(\hx,q)
	\mbox{ and }  \lim_{\gamma \to 0} \mathcal{Q}_{\gamma}(\hx,q) =\mathcal{Q}_0 (\hx,q).
\end{equation*}
It is not difficult to show that for all $\gamma \in [0,\infty]$ and $x \in S$ 
the map $\mathcal{Q}_{\gamma}(x,\cdot)$  is quadratic and that there exist $c_1,c_2>0$ such that for all $x \in S$ we have
\begin{equation*}
c_1|\sym q|^2 \leq \mathcal{Q}_{\gamma}(x, q) \leq c_2 |\sym q|^2, 
\quad \forall q \in T_x^*S \otimes T_x^* S.
\end{equation*}

For $\gamma \in [0,\infty]$ we define
$I_\gamma : W^{1,2}(S;\R^3) \to \R$ by setting
$$
I_\gamma (u) =
\begin{cases}
\int_S \mathcal{Q}_\gamma\left(x,\AA^r_u(x) \right) d\vol_S(x) & \quad \textrm{if } u \in W^{2,2}_{\iso}(S), \\
+\infty & \quad \textrm{otherwise}.
\end{cases}
$$

\subsection{Main result}

For a given sequence $(\ua^h) \subset W^{1,2}(S^h;\R^3)$
we continue to define the sequence $(y^h) \subset W^{1,2}(S^1, \R^3)$
of rescaled deformations by $\ya^h(\Theta^h) = \ua^h$.
\\
We recall the compactness result for sequences with finite bending energy,
cf. \cite[Theorem 1]{FJMMshells03} for a proof.
\begin{proposition}
	\label{T:FJM-compactness}
	Let $(u^h)\subset W^{1,2}(S^h,\R^3)$ satisfy 
	\begin{equation} \label{finitebending}
		\limsup_{h \to 0} h^{-2}J^h(\ua^h) < \infty
\end{equation}
Then there exists $u\in W^{2,2}_{\iso}(S)$ such that
(after passing to subsequences
and extending $u$ and $n$ trivially to $S^1$), as $h\to 0$ we have
\begin{align*}
	y^h - \frac{1}{|S^1|}\int_{S^1} y^h\,dx&\to u
	\mbox{ strongly in }W^{1,2}(S^1,\R^3),\\
	\nabla_h y^h&\to Q\mbox{ strongly in }L^2(S^1,\R^{3\times 3}).
\end{align*}
Here $Q \in W^{1,2}(S,\SO 3)$ is determined by the condition that
$Q\tau = \nabla_{\tau} u$ for all smooth tangent vector fields $\tau$ along $S$.
\end{proposition}
We denote by $\t W^{2,2}_{\iso}(S)$ the set of those maps
$u\in W^{2,2}_{\iso}(S)$ 
for which there exist $u_n\in W_{\iso}^{2,\infty}(S)$ 
converging strongly to $u$ in $W^{2,2}$.
The reason to introduce this space is that we are able to construct
the recovery sequence only for limiting deformations $u$ belonging to this
space. Theorem \ref{thm1} plays an essential role in this construction.
\\
The following $\Gamma$-convergence
result is the main result of this chapter: 
\begin{theorem} \label{tm:glavni}
Let $\gamma = \lim_{h \to 0} \frac{h}{\e}$.
If $\gamma=0$ then assume, in addition, that $\e^2 \ll h$. Then the following
are true:
	\begin{enumerate}[(i)]
	\item  \label{tm:glavni-i}
	 Let $(u^h)\subset W^{1,2}(S^h, \R^3)$ be such that
	 \eqref{finitebending} 
and such that $\ya^h - \frac{1}{|S^1|}\int_{S^1} y^h \to u$
strongly in $L^2(S^1)$
for some $u\in L^2(S^1, \R^3)$.
Then 
$$
\liminf_{h \to 0} h^{-2} J^h (\ua^h) \geq I_{\gamma}(u).
$$
\item 
%
%
	If, in addition, $S$ is simply connected, then for every $u \in \t W^{2,2}_{\iso}(S)$
		there exist $(\ua^h) \subset W^{1,2}(S^h; \R^3)$
		satisfying (\ref{finitebending}), and such that $y^h \to u$, strongly in $W^{1, 2}(S^1)$.
		Moreover,
		$$
		\lim_{h \to 0} h^{-2} J^h (\ua^h) = I_{\gamma}(u). 
		$$
	\end{enumerate}
\end{theorem}

\subsection{Proof of lower bound}

We consider a sequence $(u^h)\subset  W^{1,2} (S^h,\R^3)$ satisfying 
\begin{equation}
\label{uuuvjet} \limsup_{h \to 0} h^{-2} J^h(u^h) < \infty
\end{equation}
and we set $y^h(\Theta^h) = u^h$.
The following lemma is essentially contained in \cite{FJMMshells03}.
It is a consequence of \cite[Theorem 3.1]{FJM-02} and of the arguments in
\cite{FJM-06}.

\begin{lemma}
	\label{t6f2}
	Define
	\begin{equation*}
	\delta =  \left\{ 
	\begin{aligned} 
	\e, &&  \text{if }  \gamma\in (0, \infty),
	\\
	\lceil\frac{h}{\e}\rceil \e, && \text{if } \gamma = \infty,
    \\
    h, &&  \text{if }\gamma = 0.
	\end{aligned} \right. 
	\end{equation*} 
	Then there exist constants
	$C, c > 0$ such that the following is true: if $h \leq c$ and
	if $u\in W^{1,2}(S^h, \R^3)$, then there exists a map $\t R : \omega\to SO(3)$
	which is constant on each cube $x + \delta Y$ with $x\in\delta\Z^2$ and 
	there exists
	$\t R_s\in W^{1,2}(\omega, \R^{3 \times 3} )$ such that for each $a \in \R^2$ with 
	$|a_1|\leq \delta$ and $|a_2|\leq \delta$ 
	and for each $\t \omega \subset \omega$ with
	$\dist(\t \omega,\partial \omega) > c\delta$ we have:
	\begin{eqnarray*}
&& \|(\nabla_h  y)(\Xi) - \t R\|^2_{L^2(\t \omega \times I)} 
+ \|\t R - \t R_s\|^2_{L^2(\t \omega)}
+h^2 \|\t R - \t R_s\|^2_{L^\infty(\t \omega)}
\\
		&& + h^2\|(\d_1 \t R_s, \d_2 \t R_s )\|^2_{L^2(\t \omega)}
		+\|\t R(\cdot + a) - \t R\|^2_{L^2(\t \omega)}
		\\
		&& \leq C 
		\int_{\Omega} \dist^2 
\left(\nabla_h  y (\Xi), SO(3)\right).
	\end{eqnarray*}
\end{lemma}

\begin{proposition}\label{dvoskalnilimesi}
Let $\gamma \in (0,\infty)$, let $(u^h)$ satisfy (\ref{uuuvjet}) and 
let $u \in W^{2,2}_{\iso}(S)$ be as in the conclusion of 
Proposition \ref{T:FJM-compactness}.
Let $\t \omega\subset\R^2$ be a domain with $C^{1,1}$ boundary
whose closure is contained in $\omega$ and set 
$\t S= \xi(\t\omega)$.
\\
Denote by $\t R^h : \omega \to \SO 3$ the piecewise constant map obtained by applying Lemma \ref{t6f2} to $u^h$ and define $R^h : S^1\to SO(3)$ by
$
R^h = \t R^h\circ r.
$
Define $G^h\in L^2 (S^1,\R^{3 \times 3})$ by
\begin{equation} \label{apstrain}
G^h =\frac{(R^h)^T \nabla_h y^h - I}{h},
\end{equation}
where $y^h(\Theta^h) = u^h$.
Then there exist
$B \in L^2 (\t S, T^*\t S \odot T^*\t S)$ and 
$(\zeta, \rho)\in L^2(\t S, D(\mathcal U_{\gamma}))$ such that
(after passing to subsequences)
\begin{equation}\label{dvoskal-1}
	\sym G^h \wtto B+t\AA^r_{u} + \U_{\gamma}(\zeta,\rho).
\end{equation}
A similar result is true if $\gamma = \infty$ or 
if $\varepsilon^2 \ll h \ll \varepsilon$.
In the former case, $\U_{\gamma}(\zeta,\rho)$ in \eqref{dvoskal-1}
must be replaced by
$\U_{\infty}(\zeta,\rho,c)$, where 
$(\zeta,\rho,c) \in L^2(\t S; \DD(\U_{\infty}))$. In the latter case,
it must be replaced by $\U_{0}(\zeta,\varphi,\mu)$, where
$(\zeta, \varphi,\mu)\in L^2(\t S, D(\mathcal U_0))$.
\end{proposition}

\begin{proof}
Define $\o u^h : S\to\R^3$ by setting
$$
\o u^h (x)= \frac{1}{h}\int_{hI} u^h(x + tn(x)) dt\mbox{ for all }x\in S. 
$$
Let $\t R^h_s : \t\omega\to \R^{3\times 3}$ be the maps obtained
by applying Lemma \ref{t6f2} to $u^h$ and set $R^h_s = \t R^h_s\circ r$.
On   $\t S^h$
 define $z^h$ via
\begin{equation*} 
u^h = \bar{u}^h(\pi) + t (R^h_s n)(\pi) + h z^h.
\end{equation*}
Clearly
$$
\D_n u^h = (R^h_sn)(\pi) + h\D_n z^h.
$$
Let $\tau$ be a
smooth tangent vector field along $S$. Then we have
\begin{align*}
\D_{\tau} u^h &= \D_{\D_{\tau}\pi}\o u^h(\pi) + t(\D_{\D_{\tau}\pi}R^h_s)(\pi) n(\pi)
+ t (R^h_s \AA)(\pi)\D_{\tau}\pi
+ h\D_{\tau}z^h.
\end{align*}
Observe that \eqref{Dpi} implies that $\D_{\tau}\pi$ equals $\tau - t\AA\tau$
up to a term of higher order.
Using this and rewriting the problem in coordinates,
one can now argue as in \cite[Proposition 3.2]{Horneuvel12} to
deduce the claim for $\gamma > 0$. For $\gamma = 0$ one argues as in
\cite[Proposition 3.2]{Vel13}. 
The fields $\mathcal U_{\gamma}$ arise, essentially,
due to the last remark in Section
\ref{Twoscale}. We refer to \cite{Hornungvel12} for details.
\end{proof} 

The remaining proof of the lower bound
follows standard arguments: truncation, Taylor 
expansion and lower semicontinuity of integral functional with respect to two-scale convergence. Thus one obtains a lower bound on every $C^{1,1}$ bounded 
compactly contained
subdomain $\t \omega$ of $\omega$. Exhausting $\omega$ with
a sequence of such subdomains, Theorem \ref{tm:glavni} \eqref{tm:glavni-i} follows.
Details for this argument can be found in \cite{Hornungvel12,Vel13}.

\subsection{Proof of upper bound}

We begin by introducing the `geometric' part of the recovery sequence.

\begin{lemma}\label{geometric portion}
Let $u\in W^{2,\infty}_{iso}(S)$ and define $\nu : S\to\S^2$ by
$$
\nu =
\frac{\D_{\tau_1}u\times \D_{\tau_2}u}{|\D_{\tau_1}u\times \D_{\tau_2}u|}.
$$
Let
$w\in W^{2, \infty}(S, \R^3)$ and define $\mu\in W^{1, \infty}(S, \R^3)$
by
$$
\mu = (\nu\cdot \D_{\tau_1} w )\ \D_{\tau^1}u
+
(\nu\cdot \D_{\tau_2} w )\ \D_{\tau^2}u
$$
and define the deformations $v^h : S^h\to\R^3$ by
$$
v^h = u + t\nu + h\left( w + t\mu \right).
$$
Define $R\in W^{1,\infty}(S, SO(3))$ by
$
R =  \D u\ T_S + \nu\otimes n.
$
Then there exist $Y^h\in L^{\infty}(S^h, \R^{3\times 3})$
with $\|Y^h\|_{L^{\infty}(S^h)}\leq Ch^2$ such that
$$
dv^h\odot R = I + h\ du\odot dw + t \AA_u^r + Y^h.
$$
\end{lemma}
\begin{proof}
First of all observe that $R$ indeed takes values in $SO(3)$,
because $u$ is an isometric immersion. Now
set $P(x) = d\pi(x)$ and
let $\tau$, $\sigma$ be smooth tangent vector fields to $S$. We have
$$
\D_{\tau} v^h = \D_{P\tau} u(\pi) +
t \D_{P\tau}\nu(\pi) + h\left( \D_{P\tau}w(\pi) + t\D_{P\tau}\mu(\pi) \right)
$$
Since by definition
$$
\D_{\sigma}u\cdot \D_{P\tau}\nu = \sigma\cdot u^* \AA_u P\tau
$$
and since $\D_{P\tau}u\cdot \D_{\sigma} u = P\tau\cdot\sigma$ because
$u$ is an isometric immersion, we compute
\begin{align*}
\D_{\tau}v^h\cdot \D_{\sigma} u(\pi)
&= P\tau\cdot\sigma + t\sigma\cdot (u^*\AA_u)P\tau
\\
&+ h \D_{P\tau}w(\pi)\cdot \D_{\sigma}u(\pi) +
ht \D_{P\tau}\mu(\pi)\cdot (\D_{\sigma} u)(\pi). 
\end{align*}
Now observe that $P$ equals
$
(I - t \AA)T_S
$
plus an error which on $S^h$ is uniformly controlled by $h^2$. 
Hence there exist $\t Y^h$ with $\|\t Y^h\|_{L^{\infty}(S^h)}\leq Ch^2$
such that
\begin{align*}
\D_{\tau}v^h\cdot \D_{\sigma} u(\pi)
= \tau\cdot\sigma &+ t\sigma\cdot \AA^r_u \tau
\\
&+ h \D_{\tau}w(\pi)\cdot \D_{\sigma}u(\pi) +
\t Y^h(\sigma, \tau).
\end{align*}
After symmetrizing
we obtain the claim for tangential vector fields.
\\
On the other hand,
$
\D_n v^h = \nu + h\mu
$
and $Rn = \nu$. So $(d v^h\odot R)(n,n) = 1$. And for $\tau$ as above
and using $\nu\cdot \D_{\tau} u = 0$, we conclude
\begin{align*}
2(dv^h\odot R)(n, \tau) &=
\D_n v^h\cdot R\tau + 
\D_{\tau} v^h\cdot Rn
\\
&= h\mu\cdot \D_{\tau}u + h\D_{\tau} w\cdot \nu + ht\D_{\tau}\mu\cdot \nu.
\end{align*}
The first two terms on the right cancel due to the definition
of $\mu$, and the last term satisfies
the required bound.
\end{proof}

Before proceeding to prove Theorem \ref{tm:glavni} (ii),
we include the following remarks, which motivate our choice of recovery
sequences.

\paragraph{Remarks.}
\begin{enumerate}[(i)]
\item The actual recovery sequence differs in two respects
from the one used in \cite{FJMMshells03} for homogeneous materials:
\\
Firstly, it has to take into account the
inhomogeneities in the material. It will be of the form
$$
\t v^h = v^h + \textrm{relaxation part},
$$
with $v^h$ as in the lemma.
\\
Secondly, the spatial dependence of the energy density
makes it necessary to choose a nonzero displacement $w$ in Lemma
\ref{geometric portion} which generates a prescribed
first order change of the metric. This is the field $B$
arising in \eqref{dvoskal-1}.
In order to recover this field $B$,
we will have to choose $w$ in Lemma \ref{geometric portion} to be
a solution of the PDE system $B = du \odot dw.$
The existence of such a displacement
$w$ is ensured by Proposition \ref{metrch}.
\item Theorem \ref{tm:glavni} applies to multilayered materials
(cf. \cite{Schmidt-07} for the corresponding problem for plates)
as a very particular case.
In that situation, the relaxation part is trivial as in the homogeneous
case. However, the
second effect mentioned above still plays a role. Therefore,
Proposition \ref{metrch}
is essential in that simpler situation as well,
and so is its key ingredient Theorem \ref{thm1}.
\end{enumerate}

\begin{proof}[Proof of Theorem \ref{tm:glavni} (ii)] 
As in \cite{Hornungvel12}, by approximation it is enough to prove
the claim for $u\in W^{2,\infty}_{iso}(S)$
and, thanks also to Proposition \ref{metrch},
for all $B$ of the form $B = du \odot dw$ with $w\in  W^{2,\infty}(S, \R^3)$.
\\
We will use the same notation as in the statement of Lemma \ref{geometric portion};
in particular the definition of $v^h$ in terms of $w$ and $u$.
Moreover, we set $\sigma^{\a} = \D_{\tau^{\a}}u$.
\\

{\bf Case  $\gamma \in (0,\infty)$.}
Let $\zeta\in C_0^1(S,\dot{C}^1(I \times \mathcal{Y},\R^2))$
and $\rho\in  C_0^1(S,\dot{C}^1(I \times \mathcal{Y}))$
and define rescaled deformations $y^h : S^1\to\R^3$ by the following
equation on $S^h$:
\begin{eqnarray*}
y^h(\Theta^h) &=& v^h 
+ h\varepsilon \zeta_{\alpha}\left(\pi,\frac{t}{h},
\frac{r}{\e}\right)\sigma^{\alpha}
+ h\varepsilon\rho\left(\pi,\frac{t}{h},\frac{r}{\e}\right)\nu. 
\end{eqnarray*}
Lemma \ref{geometric portion} implies that on $S^1$
\begin{equation}
\label{zadnje1}
\sym \left(R^T\nabla_h y^h \right) = I + hB + th \AA^r_{u} 
+h\mathcal{U}_{\gamma}(\zeta,\rho)\left(x,t, \frac{r}{\e}\right)+o(h),
\end{equation}
where $\lim_{h \to 0} \| \tfrac{o(h)}{h}\|_{L^{\infty}}=0$.
\\
By frame invariance of $W$ and using \eqref{esup},
we deduce from \eqref{zadnje1} that
$$
\frac{1}{h^2} W\left(\cdot, \frac{r}{\e}, 
\nabla_h y \right)
\to
\mathcal{Q}\left(\cdot, \frac{r}{\e}, \AA^r_{u}
+ B+\mathcal{U}_{\gamma}(\zeta,\rho)\left(\cdot,t,\frac{r}{\e}\right)\right),
$$
pointwise on $S^1$.
From this we readily deduce 
$$
\lim_{h \to 0} h^{-2} I^h(y^h)=
\int_{S}\int_{I \times \mathcal{Y}} 
\mathcal{Q}\Big(\cdot + t n, y,\AA^r_{u} + B +
\mathcal{U}_{\gamma}(\zeta,\rho)(\cdot, t, y)\Big)\ dy\ dt\ d\vol_S.
$$

{\bf Case $\gamma=\infty$.} 
This is similar to the previous case. So we only state the formula for
the recovery sequence.
For $\zeta\in C^1_0(S,C^1_0(I,\dot{C}^1(\mathcal Y,\R^2)))$
and $\rho \in C^1_0(S, C^1_0(I, \dot C^1(\Y)))$ and $c\in C^1_0(S,C^1_0(I, \R^3))$,
we define $y^h : S^1\to\R^3$ by the following
equation on $S^h$:
\begin{eqnarray*}
	y^h(\Theta^h) 
	&=& v^h + h\varepsilon\zeta_{\alpha}\left(\pi, \frac{t}{h}, \frac{r}{\e}\right) \sigma^{\alpha}
+ h\e
\rho\left(\pi,\frac{t}{h},\frac{r}{\e}\right)\ \nu
\\
& &
+ 2h^2\left(\int_0^{ t/h} 
 c_\alpha(x,s)\ ds\right)\ \sigma^\alpha
+ h^2\left(\int_0^{ t/h} c_3(x,s)\ ds\right)\ \nu. 
\end{eqnarray*}
\\
 
{\bf Case $\e^2 \ll h \ll \e$.}
For 
$\zeta\in C^1_0 (S,\dot{C}^1(\mathcal Y,\R^2) )$
and
$\varphi\in C^2_0(S,\dot C^2(\Y))$
and
$\mu\in C^1_0(S,C^1_0(I\times \mathcal Y,\R^3))$
we define $y^h : S^1\to\R^3$ by the following
equation on $S^h$:
\begin{eqnarray*}
	y^h(\Theta^h)&=& v^h +
h\varepsilon \zeta_{\alpha}\left(\pi, \frac{r}{\e}\right)\ \sigma^{\alpha}
	+\e^2 \varphi\left(\pi,r/\e\right)\nu
	- { t\e} \partial_{y_{\alpha}} \varphi\left(\pi, \frac{r}{\e}\right) 
\sigma^{\alpha}
\\
& &
+ 2h^2 \left(\int_0^{ t/h} \mu_{\alpha}\left(\pi, s, \frac{r}{\e}\right)
\ ds\right)\ \sigma^\alpha
+ h^2\left( \int_0^{ t/h} \mu_3(\pi, s, r/\e)\ ds\right)\ \nu.
\end{eqnarray*}
In this case the expression $R^T\nabla_h y^h$
will contain a term of order $\e$, which is much greater than $h$.
After symmetrizing, however, it vanishes as in \cite{Vel13}.
Adapting the arguments from that
paper, we therefore obtain the desired claim. We leave the details to the 
interested reader.
\end{proof}

\paragraph{Acknowledgements.} PH was supported by the DFG; the warm
hospitality at the University of Zagreb is gratefully acknowledged.
IV was supported by Croatian Science Foundation grant no. 9477.

\vspace{1cm}
\def\cprime{$'$}

\bibliographystyle{acm}

\end{document}